\numberwithin{equation}{section}
\newtheorem{thm}{Theorem}[section]
\newtheorem{lem}[thm]{Lemma}
\newtheorem{prop}[thm]{Proposition}
\newtheorem{cor}[thm]{Corollary}
\theoremstyle{definition}
\theoremstyle{definition}
\theoremstyle{definition}\newtheorem*{remark}{Remark}
\newcommand{\norm}[1]{\left\lVert#1\right\rVert} 
\newcommand{\RR}{\mathbb{R}}            
\newcommand{\CC}{\mathbb{C}}
\newcommand{\ZZ}{\mathbb{Z}}
\newcommand{\Intr}{\displaystyle\int}
\newcommand{\Sumn}{\displaystyle\sum}
\newcommand{\Suma}{\Sumn_{\alpha\in R_+}}
\newcommand*\diff{\mathop{}\!\mathrm{d}}
\newcommand{\al}{\alpha}
\newcommand{\alx}{\langle\alpha,x\rangle}
\newcommand{\DT}{\mathcal{D}_k}
\newcommand{\IntN}{\displaystyle\int_{\RR^N}}
\begin{document}

\title[Hardy-type inequalities for Dunkl operators]{Hardy-type inequalities for Dunkl operators with applications to many-particle Hardy inequalities}
\author{Andrei Velicu}
\address{Andrei Velicu, Department of Mathematics, Imperial College London, Huxley Building, 180 Queen's Gate, London SW7 2AZ, UK}
\email{a.velicu15@imperial.ac.uk}
\date{}

\subjclass[2010]{35A23, 26D10, 42B10, 43A32, 70F10}

\begin{abstract}
In this paper we study various forms of the Hardy inequality for Dunkl operators, including the classical inequality, $L^p$ inequalities, an improved Hardy inequality, as well as the Rellich inequality and a special case of the Caffarelli-Kohn-Nirenberg inequality. As a consequence, one-dimensional many-particle Hardy inequalities for generalised root systems are proved, which in the particular case of root systems $A_{N-1}$ improve some well-known results.
\end{abstract}

\maketitle

\section{Introduction}

The classical Hardy inequality
$$ \IntN |\nabla f|^2 \diff x \geq \frac{(N-2)^2}{4} \IntN \frac{f^2}{|x|^2} \diff x,$$
which holds for all $f\in C_c^\infty(\RR^N)$ if $N\geq 2$, and for all $f\in C_c^\infty (\RR \setminus \{0\})$ if $N=1$, is one of the most important results in analysis. It has seen an incredible development from its beginnings in Hardy's papers, having been refined and extended to various settings; see \cite{BEL}, \cite{OK}, \cite{Davies1998} and references therein for an overview of the topic. 

Here we begin a systematic study of Hardy's inequality and its variants for Dunkl operators. Before we present the content of this paper, let us mention some related results in the literature. Some Hardy-type inequalities for function spaces associated to Dunkl operators were proved in \cite{Mejjaoli1} and \cite{Mejjaoli2}; in particular, the $L^2$ Hardy inequality and Hardy inequality for fractional Dunkl Laplacian were obtained (without any estimates on the constants).  In \cite{CRT}, the ground state representation method was used to obtain Hardy's inequality for fractional powers of the Dunkl-Hermite operators.  In \cite{GIT}, the authors proved a Pitt's inequality which implies the Hardy's inequality for fractional powers of the Dunkl Laplacian, while in \cite{GIT17} the same authors proved a Stein-Weiss inequality which in turn implies a sharp $L^p$ Hardy inequality on a Lizorkin space $\Phi_k$. 

Apart from the classical Hardy's inequality with sharp constant, we also study in this paper $L^p$ inequalities which hold for small coefficients $\gamma$ and Hardy inequality for fractional Laplacian. As in the classical case, the sharp constant in the Hardy inequality is not achieved and we prove here an improved Hardy inequality using a method based on spherical h-harmonics. Two Hardy-type inequalities are also discussed, the Rellich inequality (with sharp constant), and a special case of the Caffarelli-Kohn-Nirenberg inequality. More background on these variants is given in each corresponding section. 

In the last part of the paper we provide an application of the Hardy inequality for Dunkl operators to many-particle Hardy inequalities in dimension $d=1$ associated to root systems. Many-particle Hardy inequalities are generalisations of the classical Hardy inequality for $N$ particles in $\RR^d$ involving mutual distances between the particles. The standard example of such an inequality is 
\begin{equation} \label{standardmanyparticle} 
\sum_{i=1}^N \int_{\RR^{dN}} |\nabla_i f|^2 \diff \omega \geq C(d,N)  \int_{\RR^{dN}} |f|^2 \sum_{1 \leq i < j \leq N} \frac{1}{r_{ij}^2} \diff\omega,
\end{equation}
which was proved in \cite{HOHOLT} for $r_{ij}=|\omega_i-\omega_j|$. Here $\omega=(\omega_1, \ldots, \omega_N) \in \RR^{dN}$ so each $\omega_i \in \RR^d$, and the inequality holds for all $f\in C_c^\infty(\RR^{dN})$ if $d \geq 3$, and for all $f\in C_c^\infty (\RR^N \setminus \bigcup_{1\leq i<j\leq N} \{x : x_i=x_j\})$ if $d=1$. Similar results but for different $r_{ij}$, or with different types of geometric couplings, have also been proved in \cite{LundholmSolovej} and \cite{Lundholm}.

One of the main applications of Dunkl theory is in the mathematical physics of quantum many body problems, especially in the study of the Calogero-Moser-Sutherland (CMS) models. The linear CMS model (see \cite{Calogero}, \cite{Sutherland}) describes the interactions of $N$ particles on the real line with potentials of inverse square type, and it is characterised by the Hamiltonian
\begin{equation} \label{CMSHamiltonian}
\mathcal{H} := -\Delta + g \sum_{1\leq i < j \leq N} \frac{1}{(x_i-x_j)^2}.
\end{equation}
A new understanding of the quantum integrability of this system was possible by considering a modified version of the Hamiltonian \eqref{CMSHamiltonian} involving reflection terms, which can be expressed in terms of Dunkl operators with root system of type $A_{N-1}$ (generalised CMS models for other root systems were also considered in \cite{OP76} and \cite{OP78}). For more information about CMS models see the monograph \cite{vDV}, and for their connection to Dunkl operators see \cite{Rosler} and references therein. 

The Hardy inequality associated to the CMS model with Hamiltonian \eqref{CMSHamiltonian} is
\begin{equation} \label{CMSHardy} 
\IntN |\nabla f|^2 \diff x \geq \frac{1}{2} \IntN |f|^2 \sum_{1\leq i < j \leq N} \frac{1}{(x_i-x_j)^2} \diff x,
\end{equation}
which is valid for all $f\in C_c^\infty (\RR^N \setminus \bigcup_{1\leq i<j\leq N} \{x : x_i=x_j\})$, and where the constant $\frac{1}{2}$ is sharp. This inequality corresponds precisely to \eqref{standardmanyparticle} in the case $d=1$. We exploit here the connection between Dunkl operators and (generalised) CMS models to obtain new improved one-dimensional many-particle Hardy inequalities associated to general root systems. For example, in the particular case of root system $A_{N-1}$ we obtain
$$ \IntN |\nabla f|^2 \diff x 
\geq \frac{1}{2} \IntN |f|^2 \sum_{1\leq i < j \leq N} \frac{1}{(x_i-x_j)^2} \diff x
+ \frac{(N^2+N-4)^2}{16} \IntN \frac{|f|^2}{|x|^2} \diff x,$$
valid for $f\in C_c^\infty (\RR^N \setminus \bigcup_{i\neq j} \{x: x_i = x_j\} \setminus \bigcup_i \{x: x_i=0\})$; this result generalises \eqref{standardmanyparticle} (and also \eqref{CMSHardy}).

The paper is organised as follows. In section 2 we briefly introduce the theory of Dunkl operators and spherical h-harmonics. In section 3 we discuss the $L^p$ Hardy inequality; for general $p$ we obtain this inequality for small $\gamma$, while in the case $p=2$ we obtain the $L^2$ Hardy inequality for the full range of $\gamma$. In section 4 we prove the Hardy inequality for fractional Dunkl Laplacian. In section 5 we prove an improved Hardy inequality and as a corollary we deduce the Poincar\'e inequality for Dunkl operators. Section 6 contains two Hardy-type results: the Rellich inequality and the Caffarelli-Kohn-Nirenberg inequality. Finally, in section 7 we prove many-particle Hardy inequalities in dimension $1$ associated to general root systems.

\section{Preliminaries}

In this section we will present a very quick introduction to Dunkl operators. For more details see the survey papers \cite{Rosler} and \cite{Anker}.

The function spaces under consideration in this paper are generally real-valued. However, Dunkl operators can be defined just as well on complex-valued functions and the basic theory holds also in this setting. In some of the inequalities below (particularly when relevant in applications), we suggest extensions of our results to complex-valued functions, and to make the distinction in this case we write for example $C^1(\RR^N;\CC)$, $C_c^\infty(\RR^N;\CC)$, etc. 
 
A root system is a finite set $R\subset \RR^N\setminus \{0\}$ such that $R \cap \alpha \RR = \{ -\alpha, \alpha\}$ and $\sigma_\alpha(R) = R$ for all $\alpha\in R$. Here $\sigma_\alpha$ is the reflection in the hyperplane orthogonal to the root $\alpha$, i.e.,
$$ \sigma_\alpha x = x - 2 \frac{\alx}{\langle \alpha,\alpha \rangle} \alpha.$$
The group generated by all the reflections $\sigma_\alpha$ for $\alpha\in R$ is a finite group, and we denote it by $G$. 

The Weyl chambers associated to the root system $R$ are the connected components of $\RR^N \setminus \{x\in\RR^N : \alx =0 \text{ for some } \alpha\in R \}$. It can be checked that the reflection group $G$ acts simply transitively on the set of Weyl chambers so, in particular, the number of Weyl chambers equals the order of the group, $|G|$.

Let $k:R \to [0,\infty)$ be a $G$-invariant function, i.e., $k(\alpha)=k(g\alpha)$ for all $g\in G$ and all $\alpha\in R$. We will normally write $k_\alpha=k(\alpha)$ as these will be the coefficients in our Dunkl operators. Let us note that Dunkl operators can be defined more generally for $k:R \to \CC$, but much of the theory presented in this section requires our stronger assumption $k\geq 0$. We can write the root system $R$ as a disjoint union $R=R_+\cup (-R_+)$, and we call $R_+$ a positive subsystem;  this decomposition is not unique, but the particular choice of positive subsystem does not make a difference in the definitions below because of the $G$-invariance of the coefficients $k$.

From now on we fix a root system in $\RR^N$ with positive subsystem $R_+$. We also assume without loss of generality that $|\alpha|^2=2$ for all $\al
\in R$. For $i=1,\ldots, N$ we define the Dunkl operator on $C^1(\RR^N)$ by
$$ T_i f(x) = \partial_i f(x) + \Suma k_\alpha \alpha_i \frac{f(x)-f(\sigma_\alpha x)}{\alx}.$$
We will denote by $\nabla_k=(T_1,\ldots, T_N)$ the Dunkl gradient, and $\Delta_k = \displaystyle\sum_{i=1}^N T_i^2$ will denote the Dunkl Laplacian. Note that for $k=0$ Dunkl operators reduce to partial derivatives, and $\nabla_0=\nabla$ and $\Delta_0=\Delta$ are the usual gradient and Laplacian.

We can express the Dunkl Laplacian in terms of the usual gradient and Laplacian using the following formula:
\begin{equation} \label{Dunkllaplacian}
\Delta_k f(x) = \Delta f(x) + 2\Suma k_\alpha \left[ \frac{\langle \nabla f(x),\alpha \rangle}{\alx} - \frac{f(x)-f(\sigma_\alpha x)}{\alx^2} \right].
\end{equation}

The weight function naturally associated to Dunkl operators is
$$ w_k(x) = \prod_{\alpha\in R_+} |\alx|^{2k_\alpha}.$$
This is a homogeneous function of degree $2\gamma$, where
$$ \gamma := \Suma k_\alpha.$$
We will work in spaces $L^p(\mu_k)$, where $\diff\mu_k = w_k(x) \diff x$ is the weighted measure; the norm of these spaces will be written simply $\norm{\cdot}_p$. With respect to this weighted measure we have the integration by parts formula
$$ \IntN T_i(f) g \diff\mu_k = - \IntN f T_i(g) \diff\mu_k.$$

If one of the functions $f,g$ is $G$-invariant, then we have the Leibniz rule
$$ T_i(fg) = f T_ig + g T_if.$$
In general we have
$$ T_i(fg)(x) = T_if(x)g(x) + f(x)T_ig(x) - \Suma k_\alpha \alpha_i \frac{(f(x)-f(\sigma_\alpha x))(g(x)-g(\sigma_\alpha x))}{\alx}.$$

An important Dirichlet form estimate for the weighted norms $L^2$ of the Dunkl gradient and the classical gradient was proved in \cite{V} using carr\'e-du-champ methods:

\begin{lem} [\cite{V}] \label{dunkl-euclidean-dirichletform}
For all $f\in C_0^1(\RR^N)$ we have
$$ \IntN |\nabla_k f|^2 \diff\mu_k \geq \IntN |\nabla f|^2 \diff\mu_k.$$
\end{lem}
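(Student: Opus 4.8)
The plan is to prove the sharper identity
$$ \IntN |\nabla_k f|^2 \diff\mu_k = \IntN |\nabla f|^2 \diff\mu_k + \Suma k_\alpha \IntN \frac{(f(x)-f(\sigma_\alpha x))^2}{\alx^2}\diff\mu_k, $$
from which the stated inequality is immediate since every $k_\alpha\geq 0$. The natural tool is the \emph{carr\'e-du-champ} operator of the Dunkl Laplacian. First I would record two consequences of the integration by parts formula: applying it with one argument equal to $1$ shows $\IntN T_i(\phi)\diff\mu_k = 0$ for compactly supported $\phi$, whence $\IntN \Delta_k(f^2)\diff\mu_k = 0$; and applied twice it gives $\IntN |\nabla_k f|^2 \diff\mu_k = -\IntN f\,\Delta_k f\diff\mu_k$. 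Introducing $\Gamma_k(f) := \tfrac12\big(\Delta_k(f^2) - 2 f\,\Delta_k f\big)$, these two facts combine to give $\IntN \Gamma_k(f)\diff\mu_k = \IntN |\nabla_k f|^2\diff\mu_k$, so it suffices to compute $\Gamma_k(f)$ pointwise.

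To do so I would use formula \eqref{Dunkllaplacian} to split $\Delta_k = \Delta + L_1 + L_2$, where $L_1 f = 2\Suma k_\alpha \alx^{-1}\langle\nabla f,\alpha\rangle$ is a first-order part and $L_2 f = -2\Suma k_\alpha \alx^{-2}\big(f(x)-f(\sigma_\alpha x)\big)$ is the nonlocal part. Since $\Gamma_k(f)=\tfrac12(\Delta_k(f^2)-2f\Delta_k f)$ depends \emph{linearly} on the operator $\Delta_k$, the three pieces contribute separately. The operator $L_1$ is a derivation (a first-order differential operator obeys the Leibniz rule $L_1(f^2)=2fL_1 f$), so its contribution to $\Gamma_k$ vanishes; the ordinary Laplacian contributes the familiar $|\nabla f|^2$; and a direct expansion of $L_2(f^2)-2fL_2 f$ collapses, via the algebraic identity
$$ f(x)^2 - f(\sigma_\alpha x)^2 - 2f(x)\big(f(x)-f(\sigma_\alpha x)\big) = -\big(f(x)-f(\sigma_\alpha x)\big)^2, $$
to $\Suma k_\alpha \alx^{-2}\big(f(x)-f(\sigma_\alpha x)\big)^2$. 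Assembling the three pieces yields
$$ \Gamma_k(f) = |\nabla f|^2 + \Suma k_\alpha \frac{(f(x)-f(\sigma_\alpha x))^2}{\alx^2} \geq |\nabla f|^2, $$
and integrating against $\diff\mu_k$ produces the displayed energy identity and hence the lemma.

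I expect the main obstacle to be analytic rather than algebraic: one must justify the two integration-by-parts steps for $f\in C_0^1$, i.e.\ that $T_i f$, $f\,\Delta_k f$ and $\Delta_k(f^2)$ are all $\mu_k$-integrable and that no boundary contributions survive. This is precisely where the hypothesis $k\geq 0$ enters, since then $w_k$ is locally integrable with non-negative exponents and vanishes on each hyperplane $\alx=0$; moreover, because $f$ is $C^1$ with compact support, each difference quotient $(f(x)-f(\sigma_\alpha x))/\alx$ extends continuously across $\alx=0$ (the numerator vanishes there) and stays bounded, so no genuine singularity arises in $\Gamma_k(f)$. A safe way to discharge any residual concern at the singular set is to carry out the integrations by parts over the truncated region $\{\,\min_{\alpha\in R_+}|\alx|>\varepsilon\,\}$, observe that the surviving boundary integrals carry a factor of $w_k$ that vanishes on the reflecting hyperplanes, and then let $\varepsilon\to 0$.
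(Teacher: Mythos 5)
Your argument is correct, and it is precisely the carr\'e-du-champ computation that the paper attributes to the cited source \cite{V} (the paper itself states this lemma without proof): the pointwise identity $\Gamma_k(f)=|\nabla f|^2+\sum_{\alpha\in R_+}k_\alpha\,(f(x)-f(\sigma_\alpha x))^2\langle\alpha,x\rangle^{-2}$ together with $\int\Gamma_k(f)\,\mathrm{d}\mu_k=\int|\nabla_k f|^2\,\mathrm{d}\mu_k$ is the standard route, and your algebra for the contributions of $\Delta$, the first-order part and the nonlocal part all checks out. The one point to tidy is regularity rather than the boundary terms you worry about: your computation requires $\Delta_k f$ and $\Delta_k(f^2)$ to exist classically, i.e.\ $f\in C^2$, whereas the lemma is stated for $f\in C_0^1$, so you should either first prove the identity for smooth compactly supported $f$ and conclude by mollification, or expand $|\nabla_k f|^2$ directly from the definition of $T_i$ (which uses only one derivative) instead of passing through $\Gamma_k$.
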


A Sobolev inequality is available for the Dunkl gradient (see \cite{V}):

\begin{prop}
Let $1 \leq p < N+2\gamma$ and $q=\frac{p(N+2\gamma)}{N+2\gamma-p}$. Then there exists a constant $C>0$ such that we have the inequality 
$$ \norm{f}_q \leq C \norm{\nabla_k f}_p \qquad \forall f\in C_c^\infty(\RR^N).$$
\end{prop}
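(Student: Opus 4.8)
The plan is to exploit the fact that $\mu_k$ behaves like a $D$-dimensional measure with $D := N + 2\gamma$: since $w_k$ is homogeneous of degree $2\gamma$, one has $\mu_k(B(0,r)) \asymp r^{N+2\gamma}$, and this is precisely why the Sobolev exponent $q = \frac{p(N+2\gamma)}{N+2\gamma - p}$ carries $N+2\gamma$ in place of $N$.

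First I would settle the case $p = 2$, where the tools already at hand make the argument short. By the integration by parts formula, $\IntN |\nabla_k f|^2 \diff\mu_k = -\IntN f\, \Delta_k f \diff\mu_k$, so this quantity is the Dirichlet form of $-\Delta_k$; the associated heat semigroup $e^{t\Delta_k}$ is ultracontractive, with the on-diagonal bound $\norm{e^{t\Delta_k}}_{L^1(\mu_k)\to L^\infty(\mu_k)} \leq C t^{-D/2}$ (equivalently, its kernel satisfies Gaussian bounds of homogeneous dimension $D$). By the standard equivalence between such ultracontractivity and the Nash / Sobolev inequality for a symmetric submarkovian semigroup, this yields $\norm{f}_{2D/(D-2)} \leq C \norm{\nabla_k f}_2$ when $D > 2$. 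Alternatively, staying closer to Lemma \ref{dunkl-euclidean-dirichletform}, one can combine that lemma with a weighted Euclidean Sobolev inequality $\norm{f}_q \leq C \norm{\nabla f}_2$ on the doubling space $(\RR^N, \mu_k)$ and then use $\norm{\nabla f}_2 \leq \norm{\nabla_k f}_2$ to conclude.

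For general $p$ I would pass through the generator. Varopoulos's theorem upgrades the ultracontractivity bound directly to the homogeneous embedding $\norm{f}_q \leq C \norm{(-\Delta_k)^{1/2} f}_p$ for $1 < p < D$ with $q = \frac{pD}{D-p}$; it then remains to dominate $\norm{(-\Delta_k)^{1/2} f}_p$ by $\norm{\nabla_k f}_p$, which is exactly the $L^p(\mu_k)$-boundedness of the Dunkl Riesz transforms $T_j (-\Delta_k)^{-1/2}$, known for $1 < p < \infty$.

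I expect the main obstacle to be this passage from the generator $(-\Delta_k)^{1/2}$ to the Dunkl gradient $\nabla_k$ when $p \neq 2$, together with the endpoint $p = 1$. The Riesz-transform bound is the genuine analytic content for $1 < p < \infty$, whereas at $p = 1$ it is only of weak type, so the borderline exponent must be treated by hand: for instance by establishing the critical inequality $\norm{f}_{D/(D-1)} \leq C \norm{\nabla_k f}_1$ directly (via a potential representation of $f$ against the Dunkl Green kernel, or a coarea/isoperimetric argument adapted to $\mu_k$) and then recovering all $1 < p < D$ through the substitution $f \mapsto |f|^s$ with $s = \frac{p(D-1)}{D-p}$. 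This substitution is where the nonlocal reflection terms of $T_j(|f|^s)$ intervene; they are controlled through the elementary estimate $\big||a|^s - |b|^s\big| \leq s\,(\max(|a|,|b|))^{s-1}|a-b|$ combined with the $\sigma_\al$-invariance of $\mu_k$.
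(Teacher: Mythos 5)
First, a point of reference: the paper does not prove this Proposition at all --- it is quoted from \cite{V} as a known input --- so there is no internal proof to compare against, and your argument has to stand on its own. For $1<p<N+2\gamma$ your semigroup route is coherent and genuinely different in spirit from a direct real-variable proof: the on-diagonal bound $\norm{e^{t\Delta_k}}_{L^1(\mu_k)\to L^\infty(\mu_k)}\leq Ct^{-(N+2\gamma)/2}$ does follow from R\"osler's explicit heat kernel together with the growth estimate on $E_k$ recorded in Section 2, and Varopoulos's theorem then yields $\norm{f}_q\leq C\norm{(-\Delta_k)^{1/2}f}_p$. Two caveats, though. (i) The step you describe as ``exactly the $L^p$-boundedness of the Riesz transforms'' is really the $L^{p'}$-boundedness, used by duality via $\langle(-\Delta_k)^{1/2}f,h\rangle=\sum_j\langle T_jf,\,T_j(-\Delta_k)^{-1/2}h\rangle$; more importantly, that boundedness (Amri--Sifi) is a deep singular-integral theorem of at least the same order of difficulty as the Proposition itself, so your proof is a reduction to it rather than a self-contained argument. (ii) Your alternative for $p=2$ through a weighted Euclidean Sobolev inequality on $(\RR^N,\mu_k)$ is not free either: $|\alx|^{2k_\al}$ fails to be an $A_2$ weight once $2k_\al\geq1$, so that inequality is not a standard consequence of Muckenhoupt theory and would need its own proof. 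The first $p=2$ route (ultracontractivity $\Leftrightarrow$ Nash/Sobolev, plus $\norm{(-\Delta_k)^{1/2}f}_2=\norm{\nabla_kf}_2$ by Plancherel) is fine.

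The genuine gap is the endpoint $p=1$, which the statement includes. Neither of your two suggestions survives inspection in the Dunkl setting: the coarea/isoperimetric argument relies on the locality of the gradient and has no analogue for $\nabla_k$ because of the difference terms, while a representation of $f$ against the gradient of the Dunkl Green kernel requires pointwise derivative bounds on that kernel which are themselves delicate (even Gaussian-type bounds for the Dunkl heat kernel involve the orbit distance $\min_{g\in G}|x-gy|$ rather than $|x-y|$ and are far from elementary). Moreover, the truncation $f\mapsto|f|^s$ does not reduce $1<p<N+2\gamma$ to $p=1$ as cleanly as claimed: after your elementary estimate and the change of variables $y=\sigma_\al x$, you are left needing $\bigl\lVert\,|\nabla f|+\Suma k_\al\,|f-f\circ\sigma_\al|/|\alx|\,\bigr\rVert_p\lesssim\norm{\nabla_kf}_p$, which for $p=2$ is the carr\'e-du-champ identity behind Lemma \ref{dunkl-euclidean-dirichletform} but is not established for other $p$ --- the paper itself flags precisely this kind of $L^p$ comparison as open in the Remark at the end of Section \ref{SEC:Lphardy}. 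A uniform treatment of the full range $1\leq p<N+2\gamma$ is more naturally obtained from ultracontractivity combined with a pseudo-Poincar\'e inequality $\norm{f-e^{t\Delta_k}f}_p\leq C\sqrt{t}\,\norm{\nabla_kf}_p$ (the Coulhon--Saloff-Coste/Ledoux scheme), which bypasses Riesz transforms and includes the endpoint; you would still need to prove the pseudo-Poincar\'e inequality, but that is a much softer statement than the Riesz transform bound or the $L^1$ isoperimetric inequality.
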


An important function associated with Dunkl operators is the Dunkl kernel $E_k(x,y)$, defined on $\CC^N\times\CC^N$, which acts as a generalisation of the exponential and is defined, for fixed $y\in\CC^N$, as the unique solution $Y=E_k(\cdot, y)$ of the equations
$$ T_iY=y_i Y, \qquad i=1,\ldots N,$$
which is real analytic on $\RR^N$ and satisfies $Y(0)=1$. 

Another definition of the Dunkl exponential can be given in terms of the intertwining operator $V_k$ which is initially defined on the space $\CC[\RR^N]$ of $\CC$-polynomials on $\RR^N$ as the unique linear operator $V_k : \CC[\RR^N] \to \CC[\RR^N]$ which satisfies the properties

\begin{itemize}

\item $V_k(c)=c$ for all $c\in\CC$

\item $V_k(\CC[\RR^N]_n)=\CC[\RR^N]_n$ for all $n\geq 1$, where $\CC[\RR^N]_n$ is the space of homogeneous $n$-degree polynomials

\item $T_i V_k = V_k \partial_i$.
\end{itemize}
The last property is the characteristic intertwining formula which connects Dunkl operators to partial derivatives. The construction of this operator is done using simple linear algebra. It is then possible to extend it to a larger space of functions which in particular contains the exponentials (see \cite[Section 2.4]{Rosler} for more details) and so the Dunkl exponential can then be equivalently defined as
$$ E_k(x,y) = V_k \left( e^{\langle \cdot, y \rangle} \right) (x).$$

The following growth estimates on $E_k$ are known: for all $x\in\RR^N$, $y\in\CC^N$ and all $\beta\in\ZZ^N_+$ we have
$$ |\partial_y^\beta E_k(x,y)| \leq |x|^{|\beta|} \displaystyle\max_{g\in G} e^{\text{Re}\langle gx,y\rangle}.$$

It is then possible to define a Dunkl transform on $L^1(\mu_k)$ by
$$ \mathcal{D}_k(f)(\xi)= \frac{1}{M_k}\Intr_{\RR^N} f(x)E_k(-i\xi,x)\diff \mu_k(x), \qquad \text{ for all } \xi\in\RR^N,$$
where 
$$ M_k = \IntN e^{-|x|^2/2} \diff\mu_k(x)$$
is the Macdonald-Mehta integral. The Dunkl transform extends to an isometric isomorphism of $L^2(\mu_k)$; in particular, the Plancherel formula holds. When $k=0$ the Dunkl transform reduces to the Fourier transform.

\subsection*{Spherical h-harmonics}

We will briefly introduce h-harmonics; our presentation here is based on \cite{DX} and we invite the interested reader to this reference for more details. An $h$-harmonic polynomial of degree $n$ is a homogeneous polynomial $p$ of degree $n$ that satisfies
$$ \Delta_k p=0.$$
Spherical $h$-harmonics (or just $h$-harmonics) of degree $n$ are then defined to be restrictions of $h$-harmonic polynomials of degree $n$ to the sphere $\mathbb{S}^{N-1}$. Let $\mathcal{H}_n^N$ be the space of $h$-harmonics of degree $n$; this is a finite-dimensional space and denote its dimension by $d(n)$. Moreover, the space $L^2(\mathbb{S}^{N-1}, w_k(\xi)\diff\xi)$ is the orthogonal direct sum of the spaces $\mathcal{H}_n^N$, for $n=0,1,2,\ldots$. Let 
$$Y_i^n \quad\text{ for } i=1,\ldots, d(n)$$
be an orthonormal basis of $\mathcal{H}_n^N$. In spherical polar coordinates $x=r\xi$, for $r\in [0,\infty)$ and $\xi\in\mathbb{S}^{N-1}$, we can write the Dunkl Laplacian as
\begin{equation} \label{hharmonicslaplacian} 
\Delta_k=\frac{\partial^2}{\partial r^2}+ \frac{N+2\gamma-1}{r}\frac{\partial}{\partial r}+\frac{1}{r^2}\Delta_{k,0},
\end{equation}
where $\Delta_{k,0}$ is a generalisation of the Laplace-Beltrami operator on the sphere, and it only acts on the $\xi$ variable. Then the spherical harmonics are eigenvalues of $\Delta_{k,0}$, i.e.,
$$ \Delta_{k,0} Y= -n(n+N+2\gamma-2)Y=:\lambda_nY, \quad\text{ for all } Y\in\mathcal{H}_n^N.$$
The $h$-harmonic expansion of a function $f\in L^2(\mu_k)$ is given by
$$ f(r\xi) 
= \Sumn_{n=0}^\infty \Sumn_{i=1}^{d(n)} f_{n,i}(r)Y^n_i(\xi),$$
where
\begin{equation} \label{hharmonicsintegral} 
f_{n,i}(r)=\Intr_{\mathbb{S}^{N-1}} f(r\xi) Y_i^n(\xi) w_k(\xi) \diff\sigma(\xi),
\end{equation}
and $\sigma$ is the surface measure on the sphere $\mathbb{S}^{N-1}$.

\section{$L^p$ Hardy inequality} \label{SEC:Lphardy}

In this section we study general $L^p$ Hardy inequalities for Dunkl operators. Firstly we prove two Hardy-type inequalities in the weighted spaces $L^p(\mu_k)$, for the usual gradient and the Dunkl gradient. Using this inequality, we obtain the $L^p$ inequality for a restricted range of $p$. In the particular case $p=2$ we are able to improve on this method by using a stronger estimate, and so we finally prove the $L^2$ Hardy inequality for Dunkl operators in full generality. 

We begin with the following Hardy-type inequalities.

\begin{thm} \label{hardytype}
Let $1 < p < \infty$. Then, for any $f\in C_c^\infty(\RR^N)$ we have the inequality
\begin{equation} \label{hardytype1} 
\IntN |\langle x, \nabla f \rangle |^p \diff\mu_k \geq \left( \frac{N+2\gamma}{p} \right)^p \IntN |f|^p \diff\mu_k.
\end{equation}
If, in addition, we have $p<\frac{N+2\gamma}{2\gamma}$, then we also have
\begin{equation} \label{hardytype2} \IntN |\langle x, \nabla_k f \rangle|^p \diff\mu_k \geq \left( \frac{N+2\gamma-2p\gamma}{p} \right)^p \IntN |f|^p \diff\mu_k
\end{equation}
for all $f\in C_c^\infty(\RR^N)$.
\end{thm}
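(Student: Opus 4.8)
The plan is to reduce both inequalities to a single integration-by-parts identity for the Euler (scaling) operator $\la x, \nabla f\ra = \sum_i x_i \partial_i f$, followed by an application of Hölder's inequality. First I would establish the identity
$$ \IntN \la x, \nabla f\ra\, |f|^{p-2} f \diff\mu_k = -\frac{N+2\gamma}{p} \IntN |f|^p \diff\mu_k. $$
To see this, note that $\la x, \nabla f\ra\, |f|^{p-2} f = \tfrac1p \la x, \nabla(|f|^p)\ra$, and since $p>1$ the function $|f|^p$ is $C^1$ with compact support, so no regularisation is needed. The point is that the field $x$ has $\mu_k$-divergence equal to $N+2\gamma$: integrating by parts against the weight $w_k$ turns $\sum_i \partial_i(x_i w_k)$ into $(N+2\gamma)w_k$, using that $w_k$ is homogeneous of degree $2\gamma$ (so $\la x, \nabla w_k\ra = 2\gamma\, w_k$ by Euler's relation; the local integrability of $w_k$ makes this licit). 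Inequality \eqref{hardytype1} then follows at once: Hölder's inequality applied to the left-hand side, with exponents $p$ and $p'=p/(p-1)$ and the observation $\big||f|^{p-2}f\big|^{p'} = |f|^p$, gives
$$ \frac{N+2\gamma}{p} \IntN |f|^p \diff\mu_k \leq \left( \IntN |\la x, \nabla f\ra|^p \diff\mu_k \right)^{1/p} \left( \IntN |f|^p \diff\mu_k \right)^{1/p'}, $$
and rearranging produces the constant $\big(\tfrac{N+2\gamma}{p}\big)^p$.

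For \eqref{hardytype2} I would first record the elementary computation
$$ \la x, \nabla_k f\ra = \la x, \nabla f\ra + \Suma k_\alpha \big( f(x) - f(\sigma_\alpha x) \big), $$
which follows because $\sum_i x_i \alpha_i = \alx$ cancels the denominator in the definition of $T_i$. Pairing this against $|f|^{p-2}f$, integrating, and inserting the identity above gives
$$ \IntN \la x, \nabla_k f\ra\, |f|^{p-2} f \diff\mu_k = -\frac{N+2\gamma}{p} \IntN |f|^p \diff\mu_k + K, $$
where $K = \Suma k_\alpha \IntN \big(f(x) - f(\sigma_\alpha x)\big)\, |f(x)|^{p-2} f(x) \diff\mu_k$.

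The crux is a two-sided control of $K$. Using that $\mu_k$ is $\sigma_\alpha$-invariant, I would symmetrise each summand into $\tfrac12 \int \big(|a|^p + |b|^p - a|b|^{p-2}b - b|a|^{p-2}a\big) \diff\mu_k$, with $a = f(x)$ and $b = f(\sigma_\alpha x)$; two applications of Young's inequality then give the pointwise bounds $0 \leq |a|^p+|b|^p - a|b|^{p-2}b - b|a|^{p-2}a \leq 2(|a|^p+|b|^p)$, whence $0 \leq K \leq 2\gamma \IntN |f|^p \diff\mu_k$. Combining the two estimates, the pairing lies in the interval $\big[-\tfrac{N+2\gamma}{p},\, \tfrac{2p\gamma-(N+2\gamma)}{p}\big] \IntN |f|^p \diff\mu_k$. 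Here the hypothesis $p < \frac{N+2\gamma}{2\gamma}$ is exactly what forces the right endpoint to be negative, so the pairing stays negative and its absolute value is at least $\tfrac{N+2\gamma-2p\gamma}{p} \IntN |f|^p \diff\mu_k$. A final application of Hölder's inequality, precisely as in the first part, then yields \eqref{hardytype2}.

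I expect the main obstacle to be this estimate on $K$: one must realise that an \emph{upper} bound on $K$, not merely its positivity, is what is needed, since it is the upper bound that keeps the pairing away from zero and accounts for the reduced constant $N+2\gamma-2p\gamma$ (rather than $N+2\gamma$). The reflection-symmetrisation, which turns the asymmetric integrand into a form to which Young's inequality applies cleanly, is the mechanism that delivers both bounds and explains the appearance of the threshold $p < \frac{N+2\gamma}{2\gamma}$.
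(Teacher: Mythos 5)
Your proposal is correct and follows essentially the same route as the paper: the same divergence identity $\int \langle x,\nabla f\rangle |f|^{p-2}f \,d\mu_k = -\tfrac{N+2\gamma}{p}\int|f|^p d\mu_k$ (the paper derives it via $\sum_i T_i(x_i)=N+2\gamma$ and Dunkl integration by parts, you via the Euler relation for $w_k$), the same Hölder step, and the same bound $K\le 2\gamma\int|f|^p d\mu_k$ on the reflection term under $p<\tfrac{N+2\gamma}{2\gamma}$. The only cosmetic difference is that the paper controls the cross term $\int f(x)f(\sigma_\alpha x)|f(x)|^{p-2}d\mu_k$ by an integral Hölder inequality plus a change of variables, whereas you symmetrise under $\sigma_\alpha$ and apply Young's inequality pointwise; both yield the identical constant.
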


\begin{proof}
We first note that 
$$ \sum_{i=1}^N T_i(x_i) 
= \sum_{i=1}^N \left(1+ \Suma k_\alpha\alpha_i^2 \right)
=N+2\gamma.$$
Thus, we have
\begin{align*}
(N+2\gamma) \IntN |f|^p \diff\mu_k 
&= \sum_{i=1}^N \IntN T_i(x_i) |f|^p \diff\mu_k 
\\
&= - \sum_{i=1}^N \IntN x_i T_i(|f|^p) \diff\mu_k
\\
&= - \IntN \left\langle x,  \nabla(|f|^p) +  \Suma k_\alpha \alpha \frac{|f(x)|^p-|f(\sigma_\alpha x)|^p}{\alx} \right\rangle \diff\mu_k
\\
&= - p \IntN f|f|^{p-2} \langle x, \nabla f\rangle \diff\mu_k - \Suma k_\alpha \IntN \left(|f(x)|^p-|f(\sigma_\alpha x)|^p \right) \diff\mu_k
\end{align*}

Note that by a change of variables $y=\sigma_\alpha x$, we have
$$\IntN |f(\sigma_\alpha x)|^p \diff\mu_k(x) = \IntN |f(y)|^p \diff\mu_k(y),$$
so
\begin{equation} \label{hardytypestep} 
(N+2\gamma) \IntN |f|^p \diff\mu_k 
= - p \IntN f|f|^{p-2} \langle x, \nabla f \rangle \diff\mu_k.
\end{equation}

By H\"older's inequality with exponents $\frac{p}{p-1}$ and $p$, we have
$$ - \IntN f|f|^{p-2} \langle x, \nabla f \rangle \diff\mu_k
\leq \left(\IntN |f|^p \diff\mu_k\right)^\frac{p-1}{p} \left( \IntN |\langle x, \nabla f \rangle|^p \diff\mu_k \right)^\frac{1}{p}.$$
Combining this inequality with (\ref{hardytypestep}), we obtain (\ref{hardytype1}). 

On the other hand, going back to (\ref{hardytypestep}), we have
\begin{align}
(N+2\gamma) \IntN |f|^p \diff\mu_k 
&= - p \IntN f|f|^{p-2} \langle x , \nabla_k f \rangle \diff\mu_k \nonumber
\\
&\qquad \qquad 
+ p \Suma k_\alpha \IntN f|f|^{p-2} (f(x)-f(\sigma_\alpha x)) \diff\mu_k \label{hardydifferenceterm}
\\
&= - p \IntN f|f|^{p-2} \langle x, \nabla_k f \rangle \diff\mu_k + p\gamma \IntN |f|^p \diff\mu_k \nonumber
\\
&\qquad \qquad 
-p\Suma k_\alpha \IntN f(x)f(\sigma_\alpha x)|f(x)|^{p-2} \diff\mu_k. \nonumber
\end{align}
By H\"older's inequality with exponents $\frac{p}{p-1}$ and $p$, we have
\begin{align*}
- \IntN f(x)f(\sigma_\alpha x)|f(x)|^{p-2} \diff\mu_k 
&\leq \left(\IntN |f(x)|^p \diff\mu_k \right)^\frac{p-1}{p} \left( \IntN |f(\sigma_\alpha x)|^p \diff\mu_k \right)^\frac{1}{p}
\\
&=\IntN |f|^p \diff\mu_k,
\end{align*}
where in the last step we used a change of variables. Thus, going back to the computations above, we now have
$$ (N+2\gamma -2p\gamma) \IntN |f|^p \diff\mu_k \leq - p \IntN f|f|^{p-2} \langle x , \nabla_k f \rangle \diff\mu_k$$
and as before, using H\"older's inequality, we obtain (\ref{hardytype2}).
\end{proof}

From this inequality we easily obtain an $L^p$ Hardy inequality for a restricted range of $p$.

\begin{thm}
Let $1<p<\frac{N+2\gamma}{1+2\gamma}$. Then, for any $f \in C_c^\infty(\RR^N)$, we have the inequality
$$ \IntN |\nabla_k f|^p \diff\mu_k \geq \left( \frac{N+2\gamma-2p\gamma-p}{p}\right)^p \IntN \frac{|f|^p}{|x|^p} \diff\mu_k.$$
\end{thm}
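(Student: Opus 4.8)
The plan is to repeat the argument used to prove Theorem~\ref{hardytype}, but with the radial vector field $x$ replaced by the weighted field $x|x|^{-p}$, so that the weight $|x|^{-p}$ is built into the integration by parts from the start. The key preliminary computation is the Dunkl divergence of this field. Since $|x|^{-p}$ is $G$-invariant, its Dunkl derivatives reduce to ordinary ones, so that $T_i(|x|^{-p})=\partial_i(|x|^{-p})=-p|x|^{-p-2}x_i$, and moreover the Leibniz rule applies to the product $x_i|x|^{-p}$. Combining this with the identity $\sum_i T_i(x_i)=N+2\gamma$ from the proof of Theorem~\ref{hardytype} gives
$$\sum_{i=1}^N T_i\big(x_i|x|^{-p}\big)=(N+2\gamma)|x|^{-p}-p|x|^{-p-2}|x|^2=(N+2\gamma-p)|x|^{-p}.$$

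Next I would integrate this identity against $|f|^p$ and apply the Dunkl integration by parts formula. As in the proof of Theorem~\ref{hardytype}, writing out $\sum_i x_i T_i(|f|^p)$ produces a ``radial'' term $\langle x,\nabla(|f|^p)\rangle=p\,f|f|^{p-2}\langle x,\nabla f\rangle$ together with a reflection term $\Suma k_\alpha(|f(x)|^p-|f(\sigma_\alpha x)|^p)$; the latter integrates to zero against $|x|^{-p}\diff\mu_k$ after the change of variables $y=\sigma_\alpha x$, exactly as in the passage leading to \eqref{hardytypestep}. This yields
$$(N+2\gamma-p)\IntN\frac{|f|^p}{|x|^p}\diff\mu_k=-p\IntN|x|^{-p}f|f|^{p-2}\langle x,\nabla f\rangle\diff\mu_k.$$
I would then convert the ordinary gradient to the Dunkl gradient via $\langle x,\nabla f\rangle=\langle x,\nabla_k f\rangle-\Suma k_\alpha(f(x)-f(\sigma_\alpha x))$ and treat the resulting difference terms precisely as in \eqref{hardydifferenceterm}: one of them contributes $p\gamma\IntN|x|^{-p}|f|^p\diff\mu_k$, while the cross term $-\Suma k_\alpha\IntN|x|^{-p}f(x)f(\sigma_\alpha x)|f(x)|^{p-2}\diff\mu_k$ is bounded by a further $p\gamma\IntN|x|^{-p}|f|^p\diff\mu_k$ using a weighted H\"older inequality together with the $\sigma_\alpha$-invariance of $|x|^{-p}\diff\mu_k$. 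The net effect is the familiar shift by $-2p\gamma$, leaving
$$(N+2\gamma-2p\gamma-p)\IntN\frac{|f|^p}{|x|^p}\diff\mu_k\le -p\IntN|x|^{-p}f|f|^{p-2}\langle x,\nabla_k f\rangle\diff\mu_k.$$

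To finish, I would estimate the right-hand side by the pointwise Cauchy--Schwarz bound $|\langle x,\nabla_k f\rangle|\le|x|\,|\nabla_k f|$ and then apply H\"older with exponents $p$ and $p/(p-1)$ to the product $|\nabla_k f|\cdot\big(|x|^{-(p-1)}|f|^{p-1}\big)$, which produces exactly the factors $\big(\IntN|\nabla_k f|^p\diff\mu_k\big)^{1/p}$ and $\big(\IntN|f|^p|x|^{-p}\diff\mu_k\big)^{(p-1)/p}$. Dividing through by the latter (the inequality being trivial when it vanishes) and raising to the power $p$ gives the claim. The hypothesis $p<\frac{N+2\gamma}{1+2\gamma}$ enters precisely here: it is equivalent to $N+2\gamma-2p\gamma-p>0$, which is needed both for the constant to be positive and in order to legitimately divide by the coefficient. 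The main technical point to watch is the singular weight, since the vector field $x|x|^{-p}$ blows up at the origin; thus the integration by parts must be justified, for instance by first taking $f\in C_c^\infty(\RR^N\setminus\{0\})$ and then removing the origin by a cutoff argument, and one should check that the integrals converge near $0$, which again holds since $p<N+2\gamma$.
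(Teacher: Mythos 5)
Your argument is correct and yields the stated constant, but it follows a genuinely different route from the paper. The paper simply substitutes $f/|x|$ into the already-established Hardy-type inequality \eqref{hardytype2}, applies the Leibniz rule (valid since $|x|^{-1}$ is $G$-invariant) to get $\nabla_k(f/|x|)=|x|^{-1}\nabla_k f - f x|x|^{-3}$, and concludes with the triangle inequality in $L^p(\mu_k)$, which converts the constant $\frac{N+2\gamma-2p\gamma}{p}$ into $\frac{N+2\gamma-2p\gamma-p}{p}$. You instead rebuild the divergence identity from scratch for the weighted field $x|x|^{-p}$, obtaining $\sum_i T_i(x_i|x|^{-p})=(N+2\gamma-p)|x|^{-p}$, and then rerun the integration-by-parts, reflection-term cancellation, and weighted H\"older estimates of Theorem \ref{hardytype} with the measure $|x|^{-p}\diff\mu_k$ in place of $\diff\mu_k$; all of these steps check out (the invariance of $|x|^{-p}\diff\mu_k$ under each $\sigma_\alpha$ is exactly what makes the reflection term vanish and the cross-term bound work), and the final Cauchy--Schwarz-plus-H\"older step produces the same constant. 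What your approach buys is a self-contained derivation in which the shift by $-p$ appears directly in the divergence computation rather than through a triangle inequality; what it costs is the need to justify integration by parts against the singular field $x|x|^{-p}$, which you rightly flag. Note, though, that the paper's proof has the same implicit issue (it applies \eqref{hardytype2}, proved for $C_c^\infty(\RR^N)$, to the function $f/|x|$, which is singular at the origin), so in both cases a cutoff or density argument, using $p<N+2\gamma$ for integrability near $0$, is silently assumed; your explicit acknowledgement of this is appropriate rather than a defect.
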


\begin{proof}
This follows by taking $\frac{f}{|x|}$ in (\ref{hardytype2}). Indeed, we have
\begin{equation} \label{lphardystep1} 
\frac{N+2\gamma-2p\gamma}{p} \norm{\frac{f}{|x|}}_p 
\leq \norm{\left\langle x, \nabla_k  \left( \frac{f}{|x|} \right) \right\rangle}_p.
\end{equation}
By the Leibniz rule we have
$$ \nabla_k \left( \frac{f}{|x|}\right) = \frac{1}{|x|} \nabla_k f - f \frac{x}{|x|^3},$$
so
\begin{equation} \label{lphardystep2} 
\norm{\left\langle x, \nabla_k \left( \frac{f}{|x|}\right) \right\rangle}_p 
\leq \norm{\nabla_k f}_p + \norm{\frac{f}{|x|}}_p.
\end{equation}
Thus, combining (\ref{lphardystep1}) and (\ref{lphardystep2}), we obtain the desired $L^p$ Hardy inequality.
\end{proof}

We obtained this $L^p$ Hardy inequality through the Hardy-type inequality (\ref{hardytype2}), which in turn relies on an estimate based on the H\"older's inequality of the term containing the difference $f(x)-f(\sigma_\alpha x)$ in (\ref{hardydifferenceterm}). A better result can be obtained in the case $p=2$ by relying instead on the Hardy-type inequality (\ref{hardytype1}) for the usual gradient, and the Dirichlet form estimate 
$$ \norm{\nabla f}_2 \leq \norm{\nabla_k f}_2$$
from Lemma \ref{dunkl-euclidean-dirichletform}. Using this approach, we can obtain the $L^2$ Hardy inequality without any restrictions on $\gamma$.

\begin{thm} \label{hardy}
Assume $N+2\gamma > 2$. Then, for any $f\in C_c^\infty(\RR^N; \CC)$, we have the inequality
$$ \IntN |\nabla_k f|^2 \diff\mu_k \geq \left( \frac{N+2\gamma-2}{2} \right)^2 \IntN \frac{|f|^2}{|x|^2} \diff\mu_k,$$
where the constant is sharp.
\end{thm}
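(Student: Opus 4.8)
The plan is to reduce this $L^2$ Hardy inequality to the Hardy-type inequality \eqref{hardytype1} for the \emph{usual} gradient, exploiting the fact that the sharp constant here is exactly the square of the constant $\frac{N+2\gamma}{2}$ appearing in \eqref{hardytype1} when $p=2$. Concretely, I would first apply \eqref{hardytype1} with $p=2$ to the substituted function $f/|x|$ (treating complex-valued $f$ by writing $|f|^2 = f\bar f$), which yields
\begin{equation*}
\left( \frac{N+2\gamma}{2} \right)^2 \IntN \frac{|f|^2}{|x|^2} \diff\mu_k \leq \IntN \left| \left\langle x, \nabla\left(\frac{f}{|x|}\right)\right\rangle\right|^2 \diff\mu_k.
\end{equation*}
The inner product on the right uses the classical gradient $\nabla$, for which the ordinary Leibniz rule holds without reflection correction terms, so $\nabla(f/|x|) = \frac{1}{|x|}\nabla f - \frac{f}{|x|^3}x$. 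Taking the inner product with $x$ gives the clean expression $\langle x, \nabla(f/|x|)\rangle = \langle x, \nabla f\rangle/|x| - f/|x|$.

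Next I would expand the square $|\langle x,\nabla f\rangle/|x| - f/|x||^2$ and integrate. The diagonal terms produce $\IntN |\langle x,\nabla f\rangle|^2/|x|^2 \diff\mu_k$ and $\IntN |f|^2/|x|^2 \diff\mu_k$, while the cross term, after integrating by parts using $\sum_i T_i(x_i)=N+2\gamma$ (as in the proof of Theorem \ref{hardytype}), should collapse to a multiple of $\IntN |f|^2/|x|^2 \diff\mu_k$. Carefully bookkeeping these contributions, the inequality rearranges so that the radial derivative term is bounded below: the key identity is that $\IntN |\langle x,\nabla f\rangle|^2/|x|^2 \diff\mu_k$ equals $\left(\frac{N+2\gamma}{2}\right)^2 \IntN |f|^2/|x|^2 \diff\mu_k$ plus the manifestly nonnegative quantity one wants, arising from the angular part of $\nabla f$. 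Since $|\langle x,\nabla f\rangle|^2/|x|^2 \leq |\nabla f|^2$ pointwise (this is just the Cauchy--Schwarz bound $|\langle x/|x|,\nabla f\rangle|\leq |\nabla f|$), we obtain $\left(\frac{N+2\gamma-2}{2}\right)^2\IntN |f|^2/|x|^2\diff\mu_k \leq \IntN |\nabla f|^2\diff\mu_k$.

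The final step invokes the Dirichlet form estimate of Lemma \ref{dunkl-euclidean-dirichletform}, namely $\IntN |\nabla f|^2 \diff\mu_k \leq \IntN |\nabla_k f|^2 \diff\mu_k$, to replace the classical gradient by the Dunkl gradient and complete the inequality. This two-stage strategy is precisely why $p=2$ succeeds in full generality where the general-$p$ argument does not: instead of estimating the reflection difference term $f(x)-f(\sigma_\alpha x)$ crudely via H\"older (which forced the restriction on $\gamma$), we push all reflection content into the single clean inequality of Lemma \ref{dunkl-euclidean-dirichletform}.

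The main obstacle I anticipate is the sharpness claim rather than the inequality itself. To show the constant $\left(\frac{N+2\gamma-2}{2}\right)^2$ cannot be improved, I would construct a sequence of test functions approximating the formal minimizer $|x|^{-(N+2\gamma-2)/2}$ — which is radial, $G$-invariant, and lies on the boundary of integrability — by suitable cutoffs near the origin and infinity, and verify that the ratio of the two sides converges to the claimed constant. The delicate points are that this candidate is not in $C_c^\infty$ and that one must check the cutoff errors vanish in the limit; for $G$-invariant radial functions the Dunkl gradient reduces to the radial derivative (the reflection terms cancel), so the computation effectively mirrors the classical sharpness argument but with dimension $N$ replaced by the homogeneous dimension $N+2\gamma$.
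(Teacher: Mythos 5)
Your proposal follows essentially the same route as the paper: substitute $f/|x|$ into \eqref{hardytype1} with $p=2$, use the classical Leibniz rule, rearrange to get $\frac{N+2\gamma-2}{2}\norm{f/|x|}_2 \leq \norm{\nabla f}_2$, and then invoke Lemma \ref{dunkl-euclidean-dirichletform}; the paper simply uses the triangle inequality in $L^2(\mu_k)$ where you expand the square and integrate the cross term by parts, but both yield the same bound. Your sharpness argument via cutoffs of the formal minimizer $|x|^{-(N+2\gamma-2)/2}$ likewise matches the paper's explicit sequence $h_n$, using the same observation that radial functions see only the radial part of $\nabla_k$.
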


\begin{proof} 
Firstly, we notice that it is enough to consider real-valued functions. Indeed, if the result holds for real-valued functions and $f\in C_c^\infty(\RR^N;\CC)$, then applying the inequality to $\text{Re} f$ and $\text{Im} f$ and adding up the two resulting inequalities, we obtain that the conclusion holds true also for $f$.

We proceed similarly to the proof of the previous Theorem, but now we take $\frac{f}{|x|}$ in inequality (\ref{hardytype1}) instead. We have
$$ \frac{N+2\gamma}{2} \norm{\frac{f}{|x|}}_2 
\leq \norm{\left\langle x, \nabla \left( \frac{f}{|x|}\right) \right\rangle}_2
\leq \norm{\nabla f}_2 + \norm{\frac{f}{|x|}}_2. $$
Thus
$$ \frac{N+2\gamma-2}{2} \norm{\frac{f}{|x|}}_2 \leq \norm{\nabla f}_2 \leq \norm{\nabla_k f}_2,$$
where we used Lemma \ref{dunkl-euclidean-dirichletform}.

To check that the constant $\frac{(N+2\gamma-2)^2}{4}$ is sharp we consider, for each $n=1, 2,\ldots$, the radial function $f_n(x)=h_n(|x|)$, where
$$ h_n(r) = 
\begin{cases}
\frac{1}{c_n} & \text{ if } r\leq 1 \\
\frac{1}{c_n}r^{c_n} & \text{ if } r>1,
\end{cases}$$
where $c_n = -\frac{1}{n} - \frac{N+2\gamma-2}{2}$. Then we have
\begin{align*}
\frac{\IntN |\nabla_k f_n|^2 \diff\mu_k}{\IntN \frac{f_n(x)^2}{|x|^2}\diff\mu_k}
= \frac{\displaystyle\int_0^\infty (h'_n(r))^2 r^{N+2\gamma-1} \diff r}{\displaystyle\int_0^\infty h_n(r)^2 r^{N+2\gamma-3} \diff r}
=\frac{\frac{n}{2}}{\frac{1}{c_n^2}\left(\frac{1}{N+2\gamma} + \frac{n}{2} \right)} 
\to \frac{(N+2\gamma-2)^2}{4}
\end{align*}
as  $n\to \infty$.
\end{proof}

\begin{remark}
As mentioned above, we were able to prove the Hardy inequality in case $p=2$ for the full range $N+2\gamma >2$ because we replaced a H\"older inequality estimate for difference terms by the stronger estimate of Lemma \ref{dunkl-euclidean-dirichletform}. If we have in general 
$$ \norm{\nabla f}_p \leq \norm{\nabla_k f}_p,$$
then the same method applies to obtain $L^p$ Hardy inequality in full generality for $1<p<N+2\gamma$.
\end{remark}

\section{Hardy inequality for fractional Laplacian} \label{SEC:fractionalhardy}

In this short section we prove the Hardy inequality for fractional Laplacian using the Pitt's inequality. In particular, we obtain a new proof of the $L^2$ Hardy inequality from Theorem \ref{hardy}.

We can define the fractional Dunkl Laplacian $(-\Delta_k)^s$ for $s>0$ using Dunkl transform by the formula
$$ \DT((-\Delta_k)^sf)(\xi) 
=|\xi|^{2s} \DT(f)(\xi).$$
The main result of this section is the following Hardy inequality for the fractional Laplacian $(-\Delta_k)^s$. 

\begin{thm} \label{fractionalhardy}
For all $0\leq s < \frac{N+2\gamma}{2}$ and all $f\in\mathcal{S}(\RR^N)$ we have
$$ C(s)^2 \Intr_{\RR^N} \frac{|f(x)|^2}{|x|^{2s}}\diff \mu_k 
\leq \IntN |(-\Delta_k)^{s/2} f|^2 \diff\mu_k,$$
with sharp constant given by
\begin{equation} \label{C(s)}
C(s)=2^s\frac{\Gamma(\frac{1}{2}(N/2+\gamma+s))}{\Gamma(\frac{1}{2}(N/2+\gamma-s))}.
\end{equation}
\end{thm}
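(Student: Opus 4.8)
The theorem as stated gives a Hardy inequality for the fractional Dunkl Laplacian with a sharp constant expressed through Gamma functions, and the section's opening sentence announces that the proof will go through \emph{Pitt's inequality}. My plan is therefore to reduce the claim to the sharp Pitt inequality for the Dunkl transform established in \cite{GIT}. Recall that Pitt's inequality in this setting asserts, for a suitable range of the parameter $\beta$, an estimate of the form
$$
\IntN |\xi|^{-2\beta} |\DT(f)(\xi)|^2 \diff\mu_k
\leq D_\beta \IntN |x|^{2\beta} |f(x)|^2 \diff\mu_k,
$$
with an explicit sharp constant $D_\beta$ given by a ratio of Gamma functions. The key observation is that the left-hand side of the theorem is exactly a weighted $L^2$ norm of $f$, while the right-hand side, by the definition of the fractional Laplacian via $\DT((-\Delta_k)^{s/2}f)(\xi)=|\xi|^s \DT(f)(\xi)$ together with the Plancherel formula for the Dunkl transform, equals $\IntN |\xi|^{2s} |\DT(f)(\xi)|^2 \diff\mu_k$.

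\emph{Steps.} First I would use Plancherel to rewrite the right-hand side purely on the transform side as $\IntN |\xi|^{2s}|\DT(f)(\xi)|^2\diff\mu_k$. Next, I would apply Pitt's inequality \emph{read in the reverse direction}: that is, I apply the sharp Pitt estimate to the function $\DT(f)$ rather than to $f$, using the fact that the Dunkl transform is (up to the standard sign/scaling convention) an involution on $\mathcal{S}(\RR^N)$, so that $\DT(\DT(f))(x)=f(-x)$ and $|x|^{-2s}$ weights on the physical side correspond to $|\xi|^{2s}$ weights on the transform side. Concretely, choosing the Pitt parameter so that $\beta=s$ converts the weighted $L^2$ norm $\IntN |x|^{-2s}|f|^2\diff\mu_k$ into a bound by $\IntN |\xi|^{2s}|\DT(f)(\xi)|^2\diff\mu_k$, which is precisely $\IntN |(-\Delta_k)^{s/2}f|^2\diff\mu_k$. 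The constraint $0\le s<\frac{N+2\gamma}{2}$ is exactly the admissible range for $\beta$ in Pitt's inequality (the transform side weight $|\xi|^{2s}$ must be locally integrable against $\diff\mu_k$ near the origin, which requires $2s<N+2\gamma$), so the hypothesis on $s$ is dictated by this range.

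\emph{Identifying the constant and sharpness.} Finally I would match constants. Pitt's sharp constant in the Dunkl setting is a ratio of Gamma functions in the variable $N/2+\gamma$; after substituting $\beta=s$ and taking the appropriate square root (since the theorem is stated with $C(s)^2$ on one side), one should recover exactly
$$
C(s)=2^s\frac{\Gamma\!\left(\tfrac{1}{2}(N/2+\gamma+s)\right)}{\Gamma\!\left(\tfrac{1}{2}(N/2+\gamma-s)\right)}.
$$
Sharpness transfers directly: the extremising (near-extremising) family for Pitt's inequality maps under $\DT$ to a near-extremising family here, so no loss is incurred. As a sanity check, one verifies the limiting case $s\to 1$ against Theorem~\ref{hardy}: using the duplication formula for $\Gamma$, $C(1)$ should collapse to $\frac{N+2\gamma-2}{2}$, which reproduces the sharp $L^2$ Hardy constant and justifies the remark that this yields a new proof of Theorem~\ref{hardy}.

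\emph{Main obstacle.} The routine parts are Plancherel and the change of variables; the delicate point is the bookkeeping of the Pitt constant and its sharp range, namely ensuring that the normalisation of the Dunkl transform used in \cite{GIT} (which fixes the powers of $2$ and the Macdonald--Mehta factor $M_k$) is consistent with the convention in this paper, so that the explicit constant $C(s)$ comes out with the stated $2^s$ and the correct half-integer arguments $\tfrac{1}{2}(N/2+\gamma\pm s)$ in the Gamma functions. I expect verifying this constant-matching, together with confirming that sharpness is preserved under the transform duality, to be the crux of the argument.
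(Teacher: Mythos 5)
Your proposal is correct and follows essentially the same route as the paper: the paper also deduces the theorem from the sharp Pitt inequality of \cite{GIT} by substituting $f=\DT(g)$ (equivalently, applying Pitt to $\DT^{-1}(f)$ and using $\DT^{-1}(f)(\xi)=\DT(f)(-\xi)$ with the reflection-invariance of the weights), then invoking Plancherel and the definition of $(-\Delta_k)^{s/2}$ to identify $\norm{|\xi|^s\DT(f)}_2$ with $\norm{(-\Delta_k)^{s/2}f}_2$. The only cosmetic difference is that your check $C(1)=\tfrac{N+2\gamma-2}{2}$ needs only the recurrence $\Gamma(a+1)=a\Gamma(a)$ rather than the duplication formula.
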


This will follow easily from the following Pitt's inequality.

\begin{prop} [\cite{GIT}] \label{pittsineq}
Let $0\leq s < \frac{N+2\gamma}{2}$. Then, for all $g\in \mathcal{S}(\RR^N)$, the following inequality holds 
$$C(s) \norm{|\xi|^{-s}\mathcal{D}_k(g)(\xi)}_2 \leq  \norm{|x|^sg(x)}_2 ,$$
with sharp constant $C(s)$ given by (\ref{C(s)}).
\end{prop}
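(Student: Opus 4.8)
The plan is to prove Pitt's inequality by diagonalising the Dunkl transform over spherical h-harmonics, thereby reducing the $N$-dimensional estimate to a one-parameter family of sharp weighted inequalities for Hankel transforms, which I would then settle with the Mellin transform.

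First I would use the h-harmonic expansion $g(r\xi) = \sum_{n\geq 0}\sum_{i=1}^{d(n)} g_{n,i}(r) Y_i^n(\xi)$ from \eqref{hharmonicsintegral}. The structural input is the Dunkl--Bochner identity: if $g$ carries a single h-harmonic component, $g(r\xi) = r^n\, \psi(r)\, Y_i^n(\xi)$, then
$$ \mathcal{D}_k(g)(\rho\eta) = (-i)^n\, \rho^n\, H_{\nu_n}(\psi)(\rho)\, Y_i^n(\eta), \qquad \nu_n = n + \tfrac{N}{2} + \gamma - 1, $$
where $H_{\nu_n}$ is the Hankel transform of order $\nu_n$, which is unitary on $L^2((0,\infty), \rho^{2\nu_n+1}\diff\rho)$. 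Passing to polar coordinates, in which $\diff\mu_k = r^{N+2\gamma-1} w_k(\xi)\,\diff r\,\diff\sigma(\xi)$, and using the orthonormality of the $Y_i^n$ on $L^2(\mathbb{S}^{N-1}, w_k\diff\sigma)$ together with the Plancherel identity for $\mathcal{D}_k$, both weighted norms split as diagonal sums over $(n,i)$ of radial integrals. Since $2n + N + 2\gamma - 1 = 2\nu_n + 1$, the factor $r^n$ is absorbed into the weight and the claim reduces, for each $n$, to the one-dimensional inequality
$$ C(s)\left(\int_0^\infty |H_{\nu_n}(\psi)(\rho)|^2\, \rho^{-2s}\, \rho^{2\nu_n+1}\diff\rho\right)^{1/2} \leq \left(\int_0^\infty |\psi(r)|^2\, r^{2s}\, r^{2\nu_n+1}\diff r\right)^{1/2}, $$
with a constant I must show is smallest at $n=0$.

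To analyse each component I would substitute $\phi = r^s\psi$ and study the operator $T_n\phi = \rho^{-s} H_{\nu_n}(r^{-s}\phi)$ on $L^2(r^{2\nu_n+1}\diff r)$; the sharp constant for degree $n$ is then $C_n(s) = \|T_n\|^{-1}$. Under the Mellin transform, $H_{\nu_n}$ becomes multiplication by a ratio of Gamma functions, while multiplication by $r^{\pm s}$ merely shifts the Mellin variable, so $T_n$ acts as a pure multiplier along the Plancherel line $\mathrm{Re}\,z = \nu_n + 1$. Its modulus there works out to
$$ |m_n(t)| = 2^{-s}\,\frac{\big|\Gamma\!\big(\tfrac{1}{2}(\nu_n+1-s) + \tfrac{it}{2}\big)\big|}{\big|\Gamma\!\big(\tfrac{1}{2}(\nu_n+1+s)+\tfrac{it}{2}\big)\big|}, $$
and the Beta-integral representation $\frac{\Gamma(a+iy)}{\Gamma(a+s+iy)} = \frac{1}{\Gamma(s)}\int_0^1 \tau^{a+iy-1}(1-\tau)^{s-1}\diff\tau$ (valid here since $\tfrac{1}{2}(\nu_n+1-s)>0$ precisely when $s<\tfrac{N+2\gamma}{2}$) gives $|m_n(t)|\leq |m_n(0)|$, with equality at $t=0$ because $|\tau^{iy}|=1$. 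Hence $\|T_n\| = |m_n(0)| = 2^{-s}\Gamma(\tfrac{1}{2}(\nu_n+1-s))/\Gamma(\tfrac{1}{2}(\nu_n+1+s))$, so $C_n(s) = 2^s\Gamma(\tfrac{1}{2}(\nu_n+1+s))/\Gamma(\tfrac{1}{2}(\nu_n+1-s))$. Since $a\mapsto \Gamma(a+s)/\Gamma(a)$ is increasing for $s>0$, the constant $C_n(s)$ is increasing in $\nu_n$, so the binding (smallest) constant is at $n=0$, where $\nu_0+1 = \tfrac{N}{2}+\gamma$ and $C_0(s)$ is exactly the stated $C(s)$. This simultaneously yields the inequality and its sharpness (the endpoint $s=0$ being just the Plancherel isometry).

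The main obstacle I anticipate is twofold. The softer part is justifying the Dunkl--Bochner identity and the term-by-term Plancherel decomposition with the required integrability on the Schwartz class $\mathcal{S}(\RR^N)$; this is standard but must be set up carefully so that the interchange of summation with integration and the use of the Mellin--Plancherel theorem are legitimate. The genuinely delicate point is the sharp-constant computation: pinning down the exact Gamma-ratio multiplier of the weighted Hankel operator (including the normalising powers of $2$) and proving that its modulus on the critical line is maximised at the centre, which is where the Beta-integral representation does the decisive work and where the hypothesis $s<\tfrac{N+2\gamma}{2}$ is used.
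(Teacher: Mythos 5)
The paper offers no proof of this proposition --- it is imported verbatim from \cite{GIT} --- and your argument is essentially the proof given in that reference (and, for $k=0$, Beckner's classical proof of Pitt's inequality): decompose into spherical h-harmonics so that $\mathcal{D}_k$ acts componentwise as the Hankel transform $H_{\nu_n}$ with $\nu_n=n+\frac{N}{2}+\gamma-1$, compute the weighted operator as a Mellin multiplier on the line $\mathrm{Re}\, z=\nu_n+1$, bound its modulus at the centre via $\left|B(a+iy,s)\right|\leq B(a,s)$, and use the monotonicity of $a\mapsto \Gamma(a+s)/\Gamma(a)$ to see that the extremal component is $n=0$, where $\nu_0+1=\frac{N}{2}+\gamma$ reproduces \eqref{C(s)} exactly (your power of $2$ is also consistent with the paper's normalisation of $\mathcal{D}_k$, as one can check against the Euclidean case). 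The argument is correct; the only points left slightly implicit --- justifying term-by-term Plancherel on $\mathcal{S}(\RR^N)$ and the standard approximation argument showing that the central value of the multiplier modulus is genuinely the operator norm (whence sharpness, since the supremum is not attained) --- are routine and handled the same way in \cite{GIT}.
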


\begin{proof}[Proof of Theorem \ref{fractionalhardy}]

The Hardy inequality for fractional Laplacian is essentially a rewriting of Pitt's inequality. Indeed, let $f=\DT(g)$, then by Proposition \ref{pittsineq} it follows that
$$ C(s) \norm{|x|^{-s} f}_2 \leq \norm{|\xi|^s \DT^{-1}(f)(\xi)}_2 
= \norm{|\xi|^s \DT(f)}_2.$$
Here in the last step we used the property that $\DT^{-1}(f)(\xi) = \DT(f)(-\xi)$ and a change of variables $\xi \mapsto -\xi$. But, by Plancherel's formula
$$ \norm{|\xi|^s \DT(f)(\xi)}_2 
= \norm{\DT((-\Delta_k)^{s/2} f)}_2 
= \norm{(-\Delta_k)^{s/2} f}_2.$$
Thus, we have obtained that
$$ C(s)^2 \IntN \frac{|f(x)|^2}{|x|^{2s}} \diff \mu_k \leq \IntN |(-\Delta_k)^{s/2} f|^2 \diff\mu_k,$$
as required.
\end{proof}

In the particular case $s=1$, we obtain the classical $L^2$ Hardy inequality for Dunkl operators from Theorem \ref{hardy}. To see that this inequality does indeed follow from Theorem \ref{fractionalhardy} it is enough to check that 
$$ \IntN |\nabla_k f|^2 \diff\mu_k = \IntN |(-\Delta_k)^{1/2} f|^2 \diff\mu_k.$$
By Plancherel's formula and the definition of fractional Laplacian, we obtain 
\begin{align*}
\IntN |(-\Delta_k)^{1/2} f|^2 \diff\mu_k
= \IntN |\DT((-\Delta_k)^{1/2} f)|^2 \diff\mu_k(\xi)
= \IntN |\xi|^{2} |\DT(f)(\xi)|^2 \diff\mu_k(\xi)
\\
= \IntN \DT((-\Delta_k) f)(\xi) \cdot \overline{\DT(f)} \diff\mu_k(\xi)
= -\IntN  \Delta_k f \cdot \overline{f} \diff\mu_k
= \IntN |\nabla_k f|^2 \diff\mu_k,
\end{align*}
where in the end we used integration by parts.

\section{Improved Hardy's inequalities} \label{SEC:improvedhardy}

It is a well known fact that the best constant in the classical Hardy inequality is not achieved, i.e., there is no $f\neq 0$ such that 
$$\IntN |\nabla f|^2 = \frac{(N-2)^2}{4} \IntN \frac{f^2}{|x|^2}.$$
Based on this observation, improved Hardy inequalities were proved, where the error in the classical Hardy inequality is bounded from below, usually by a term involving a suitable potential $V$. More precisely, inequalities of the following form are studied
\begin{equation} \label{BVimprovedhardy}
\int_\Omega |\nabla f|^2 \diff x - \frac{(N-2)^2}{4} \int_{\Omega} \frac{f^2}{|x|^2} \diff x 
\geq \int_\Omega |V| f^2 \diff x.
\end{equation}
The first such result was proved by Brezis and V\'azquez in \cite{BV} where it was used in the study of singular extremal solutions of a semilinear elliptic equation. In that paper the above inequality is proved for constant potential $V$ that depends on the domain $\Omega$. The proof is based on a symmetrisation argument. However, the authors noticed that even in this case the best constant is not achieved, so they posed the question of whether the improvement appearing on the right hand side of inequality (\ref{BVimprovedhardy}) is just the first term of a series. This was answered positively by Filippas and Tertikas in \cite{FT}, where such a construction can be found. Similar improved inequalities have also been found for other Hardy type inequalities, for example Hardy inequality on domains, with weights depending on the distance to the boundary. 

In this section we will prove an improved Hardy's inequality for Dunkl operators using a method similar to \cite{FT} based on spherical h-harmonics.

\begin{thm} \label{improvedhardy}
Let $X(t)=(1-\log t)^{-1}$. Let $\Omega\subset \RR^N$ be a bounded domain with $\delta= \displaystyle\sup_{x\in\Omega} |x|$. Then there exists a constant $C>0$ such that, for any $f\in C_0^\infty(\Omega)$ we have the inequality
\begin{align*}
\int_\Omega \left[|\nabla_k f|^2-\frac{(N+2\gamma-2)^2}{4}\frac{f^2}{|x|^2}\right] \diff\mu_k
\geq C \left(\int_\Omega |f|^q X^{1+q/2}\left(\frac{|x|}{\delta}\right) \diff\mu_k\right)^{2/q},
\end{align*}
where $q=\frac{2(N+2\gamma)}{N+2\gamma-2}$ is the Sobolev coefficient.
\end{thm}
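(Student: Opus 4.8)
The plan is to decompose $f$ into spherical h-harmonics and reduce the problem to a family of one-dimensional inequalities, following the Filippas-Tertikas strategy. Using the expansion $f(r\xi) = \sum_{n,i} f_{n,i}(r) Y_i^n(\xi)$ and the polar form \eqref{hharmonicslaplacian} of the Dunkl Laplacian, I would first establish that the left-hand side splits as a sum over $(n,i)$. The key is the Dirichlet form identity: since $\IntN |\nabla_k f|^2 \diff\mu_k = -\IntN f \Delta_k f \diff\mu_k$, and $\Delta_{k,0} Y_i^n = \lambda_n Y_i^n$ with $\lambda_n = -n(n+N+2\gamma-2)$, the orthonormality of the $Y_i^n$ in $L^2(\mathbb{S}^{N-1}, w_k \diff\sigma)$ should give

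\begin{equation} \label{plan-split}
\int_\Omega \left[|\nabla_k f|^2 - \frac{(N+2\gamma-2)^2}{4}\frac{f^2}{|x|^2}\right]\diff\mu_k
= \sum_{n,i} \int_0^\infty \left[ (f_{n,i}')^2 + \frac{-\lambda_n - \frac{(N+2\gamma-2)^2}{4}}{r^2} f_{n,i}^2 \right] r^{N+2\gamma-1} \diff r.
\end{equation}

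Since $-\lambda_n \geq 0$ for all $n$, every term in the sum dominates the $n=0$ term, so it suffices to prove the one-dimensional inequality for each radial profile $u = f_{n,i}$ with the worst (smallest) coefficient, namely $n=0$.

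Next I would reduce the one-dimensional radial inequality to a sharp Hardy-type statement on the half-line with weight $r^{N+2\gamma-1}$. Setting $D = N+2\gamma$ to lighten notation, the core estimate becomes: for $u \in C_c^\infty(0,\infty)$,
\begin{equation} \label{plan-1d}
\int_0^\infty (u')^2 r^{D-1}\diff r - \frac{(D-2)^2}{4}\int_0^\infty \frac{u^2}{r^2} r^{D-1}\diff r
\geq C \left( \int_0^\infty |u|^q X^{1+q/2}(r/\delta)\, r^{D-1} \diff r \right)^{2/q}.
\end{equation}
The standard device is the substitution $u(r) = r^{-(D-2)/2} v(r)$, which removes the critical Hardy singularity and turns the left-hand side into $\int_0^\infty (v')^2 r\,\diff r$ (a one-dimensional weighted Dirichlet energy with logarithmic behaviour), after an integration by parts in which the singular cross term is precisely cancelled by the Hardy constant. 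Then the function $X(r/\delta) = (1 - \log(r/\delta))^{-1}$ enters as the natural weight adapted to this degenerate energy.

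The main obstacle, and the technical heart of the argument, will be the final step: proving the one-dimensional weighted Sobolev-type inequality for $v$, i.e.\ bounding $\int_0^\infty (v')^2 r \diff r$ below by $\big(\int_0^\infty |v|^q X^{1+q/2} r\, \diff r\big)^{2/q}$ after accounting for the $r^{-(D-2)/2}$ factor absorbed into the weights. This requires a sharp one-dimensional Hardy-Sobolev inequality with the logarithmic weight $X$, whose exponent $1+q/2$ must be matched exactly so that the resulting power of $X$ is integrable and the constant $C$ is finite; I would likely invoke or adapt the one-dimensional Sobolev inequality on $(0,\delta)$ with the degenerate weight, checking that the boundary condition $\delta = \sup_{x\in\Omega}|x|$ guarantees $r/\delta \leq 1$ so that $X(r/\delta) \in (0,1]$ is well-defined and positive throughout $\Omega$. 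Establishing the correct power $1+q/2$ and verifying that the Sobolev embedding closes with the Dunkl-dimensional parameter $q = \frac{2(N+2\gamma)}{N+2\gamma-2}$ (so that $q$ is the critical exponent for the effective dimension $N+2\gamma$, matching the Sobolev inequality stated earlier in the excerpt) is where care is needed; everything else is a reduction via orthogonality and a change of variables.
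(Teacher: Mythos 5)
Your opening moves coincide with the paper's: expand $f$ in spherical h-harmonics, use the polar form \eqref{hharmonicslaplacian} of $\Delta_k$ and the orthogonality of the $Y_i^n$ to split the Hardy deficit into a sum over modes $(n,i)$, and handle the radial mode by the substitution $u=r^{(N+2\gamma-2)/2}f_{0,1}$, which reduces its contribution to $\int_0^\delta r\,u'(r)^2\,\diff r$ and is then controlled by a one-dimensional Maz'ya-type weighted inequality (Lemma \ref{improvedhardylemma} in the paper). The gap is in the sentence claiming that ``it suffices to prove the one-dimensional inequality for each radial profile $u=f_{n,i}$''. Proving your one-dimensional estimate mode by mode and summing yields a lower bound of the form $\sum_{n,i}\bigl(\int_0^\delta |f_{n,i}|^q X^{1+q/2}r^{N+2\gamma-1}\diff r\bigr)^{2/q}$, i.e.\ an $\ell^2$ sum of modewise $L^q$ quantities. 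Since $q>2$, this does not dominate $\bigl(\int_\Omega|f|^qX^{1+q/2}\diff\mu_k\bigr)^{2/q}$: the triangle inequality only gives $\norm{f}_q\le\sum_{n,i}\norm{f_{n,i}Y_i^n}_q$, and $\bigl(\sum_{n,i}a_{n,i}\bigr)^2$ is not controlled by $\sum_{n,i}a_{n,i}^2$ over infinitely many terms. So the reassembly of the $L^q$ right-hand side from the modewise pieces fails, and this is precisely the hard point of the Filippas--Tertikas scheme.

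The paper avoids this by treating the non-radial part collectively rather than modewise. For $n\ge1$ one has $I_{n,i}\ge\frac{\lambda_n}{\lambda_n-\Lambda}\int\bigl[|\nabla_kf_{n,i}|^2-\lambda_n f_{n,i}^2/|x|^2\bigr]\diff\mu_k$ with $\lambda_n<0<\Lambda$, so $C_1=\min_{n\ge1}\frac{\lambda_n}{\lambda_n-\Lambda}>0$ and summing gives $\sum_{n\ge1,i}I_{n,i}\ge C_1\int_\Omega|\nabla_k(f-f_{0,1})|^2\diff\mu_k$ --- a fixed fraction of the full Dirichlet energy of the reassembled function $f-f_{0,1}$ on $\RR^N$. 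The $N$-dimensional Dunkl--Sobolev inequality (together with $X\le1$) then produces the weighted $L^q$ bound for $f-f_{0,1}$ in one stroke, and the theorem follows by combining this with the one-dimensional estimate for $f_{0,1}$ via the triangle inequality in $L^q(\mu_k)$. This two-tier structure --- the one-dimensional lemma only for the single radial mode, the genuine Sobolev embedding for everything else --- is what your purely modewise reduction is missing; without it the argument does not close.
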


We will need the following Lemma, which is itself a weighted Hardy inequality in one dimension. For a comprehensive treatment of such inequalities, see \cite{OK} and \cite{Mazya}. The following Lemma is a direct consequence of Theorem 1.3.2/3 in \cite{Mazya}.

\begin{lem} \label{improvedhardylemma}
Let $q\geq 2$ and $\delta>0$. Then there exists a constant $C>0$ such that the following inequality holds for all $g\in C_0^\infty(0,\delta)$
$$ \int_0^\delta t |g'(t)|^2 \diff t 
\geq C \left(\int_0^\delta \frac{|g(t)|^q}{t} X\left(\frac{t}{\delta}\right)^{1+\frac{q}{2}} \diff t\right)^{2/q}.$$
\end{lem}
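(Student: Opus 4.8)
The plan is to prove the one-dimensional weighted Hardy inequality of Lemma~\ref{improvedhardylemma} by reducing it, via a substitution, to a cleaner weighted inequality and then invoking the M\"az'ya criterion cited in the statement. First I would recall that an inequality of the form
$$ \int_0^\delta a(t)|g'(t)|^2\diff t \geq C\left(\int_0^\delta b(t)|g(t)|^q\diff t\right)^{2/q} $$
for all $g\in C_0^\infty(0,\delta)$ (equivalently, $g(0)=0$ with $g$ supported away from $\delta$) is governed by Theorem~1.3.2/3 in \cite{Mazya}, which characterizes the validity of such a weight inequality through the finiteness of an explicit supremum
$$ B:=\sup_{0<s<\delta}\left(\int_s^\delta b(t)\diff t\right)^{1/q}\left(\int_0^s \frac{\diff t}{a(t)}\right)^{1/2}. $$
Here $a(t)=t$ and $b(t)=t^{-1}X(t/\delta)^{1+q/2}$, with $X(u)=(1-\log u)^{-1}$. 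So the entire task reduces to showing that this $B$ is finite, since then M\"az'ya's theorem furnishes the constant $C>0$.

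The key computational steps are then as follows. The inner Dirichlet integral is elementary: $\int_0^s \frac{\diff t}{t}$ diverges logarithmically, so one must be slightly careful---this is exactly why the weight $X$ appears, to compensate the logarithmic growth. I would compute $\int_0^s t^{-1}\diff t$ as a logarithm with an appropriate lower cutoff understood through the test-function class, or more cleanly use the precise form of the M\"az'ya criterion for the pair of weights at hand. The outer integral $\int_s^\delta t^{-1}X(t/\delta)^{1+q/2}\diff t$ is where the substitution pays off: setting $u=t/\delta$ and then $X=(1-\log u)^{-1}$, one has $\diff X/X^2 = \diff u/u$ up to sign, so that $\int t^{-1}X^{1+q/2}\diff t = \int X^{q/2-1}\diff X$, which is a convergent power integral near the singular endpoint precisely because $q\geq 2$ forces $q/2-1\geq 0$. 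Carrying out both integrals and taking the supremum over $s$, one checks the product stays bounded uniformly, establishing $B<\infty$.

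Concretely, I would change variables once and for all by setting $g(t)=\phi(X(t/\delta))$ or equivalently $t=\delta e^{1-1/\tau}$ with $\tau=X(t/\delta)\in(0,1)$, which transforms $\int_0^\delta t|g'|^2\diff t$ into a standard (unweighted or power-weighted) one-dimensional integral over $\tau\in(0,1)$ and likewise sends the right-hand side into a pure power weight. This renders the problem into the classical one-dimensional Hardy-Sobolev inequality $\int_0^1|\phi'(\tau)|^2\diff\tau \gtrsim (\int_0^1 \tau^{q/2-1}|\phi(\tau)|^q\diff\tau)^{2/q}$ with $\phi(1)=0$, whose validity for $q\geq 2$ is exactly the endpoint regime covered by the M\"az'ya criterion. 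The main obstacle I anticipate is bookkeeping at the singular endpoint $t\to 0^+$ (where $X\to 0^+$): one must verify that the boundary term produced by the change of variables vanishes for $g\in C_0^\infty(0,\delta)$ and that the logarithmic weight $X$ is chosen with exactly the right exponent $1+q/2$ so that the transformed measure becomes an integrable power---the borderline nature of the logarithmic Hardy weight means an off-by-one error in the exponent would break convergence, so the delicate point is confirming that the M\"az'ya supremum $B$ is genuinely finite rather than merely formally so.
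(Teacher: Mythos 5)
The paper offers no proof of this lemma beyond the citation to M\"az'ya, so the real question is whether your verification of the criterion is correct, and as written it is not: you have the orientation of the criterion backwards. With $a(t)=t$ the quantity $\int_0^s \frac{\diff t}{t}$ is not "logarithmically divergent but compensable" --- it is identically $+\infty$ for every $s>0$, so the supremum $B$ you write down is infinite and the weight $X$, which sits on the \emph{measure} side, cannot repair this. Since $g\in C_0^\infty(0,\delta)$ vanishes at both endpoints, the correct move is to use the version of the criterion adapted to functions vanishing at $t=\delta$, namely
$$ B=\sup_{0<s<\delta}\left(\int_0^s \frac{X(t/\delta)^{1+q/2}}{t}\diff t\right)^{1/q}\left(\int_s^\delta \frac{\diff t}{t}\right)^{1/2}. $$
Here both factors are finite: using $\frac{\diff}{\diff t}X(t/\delta)=X(t/\delta)^2/t$ one gets $\int_0^s \frac{X^{1+q/2}}{t}\diff t=\frac{2}{q}X(s/\delta)^{q/2}$, while $\int_s^\delta\frac{\diff t}{t}=X(s/\delta)^{-1}-1\leq X(s/\delta)^{-1}$, so $B\leq(2/q)^{1/q}$ and the lemma follows. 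The exponent $1+q/2$ is exactly what makes the powers of $X$ cancel; your computation of the "outer" integral $\int_s^\delta t^{-1}X^{1+q/2}\diff t$ analyses a region where there is no singularity at all and misses this cancellation.

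Your alternative route via the substitution $\tau=X(t/\delta)$ is salvageable but also contains an error: one has $\diff t/t=\diff\tau/\tau^2$ and $g'(t)=\phi'(\tau)\tau^2/t$, so $\int_0^\delta t|g'|^2\diff t$ transforms into $\int_0^1\tau^2|\phi'(\tau)|^2\diff\tau$, not the unweighted integral $\int_0^1|\phi'|^2\diff\tau$ you state. Since $\int_0^1\tau^2|\phi'|^2\diff\tau\leq\int_0^1|\phi'|^2\diff\tau$, the unweighted inequality you invoke is strictly weaker and does not imply the lemma; you must prove $\int_0^1\tau^2|\phi'(\tau)|^2\diff\tau\geq C\bigl(\int_0^1\tau^{q/2-1}|\phi(\tau)|^q\diff\tau\bigr)^{2/q}$, which again requires the criterion in the orientation $\bigl(\int_0^r\tau^{q/2-1}\diff\tau\bigr)^{1/q}\bigl(\int_r^1\tau^{-2}\diff\tau\bigr)^{1/2}\lesssim r^{1/2}\cdot r^{-1/2}=1$. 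With either of these corrections the argument closes; without them it does not.
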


\begin{proof} [Proof of Theorem \ref{improvedhardy}]

We note first that it is enough to prove the result in the case when $\Omega$ is the ball $B_\delta$ of radius $\delta$ centred at the origin. Indeed, for a general $\Omega$ we have $\Omega \subset B_\delta$, so the result for $B_\delta$ implies in particular the inequality for $\Omega$. 

Consider the $h$-harmonic expansion of a function $f\in L^2(\mu_k)$ 
$$ f(r\xi) 
= \Sumn_{n=0}^\infty \Sumn_{i=1}^{d(n)} f_{n,i}(r)Y^n_i(\xi).$$
The functions $f_{n,i}$, given by (\ref{hharmonicsintegral}), are defined on $\RR_+$, but with a slight abuse of notation we will also see them as radial functions on $\RR^N$ by identifying $f_{n,i}(x)=f_{n,i}(|x|)$. Using the formula (\ref{hharmonicslaplacian}) for the Dunkl Laplacian, we have
$$ \Intr_\Omega |\nabla_k f|^2 \diff\mu_k
= -\Intr_\Omega f\Delta_k f \diff\mu_k 
= -\Intr_\Omega f\left[ \frac{\partial^2 f}{\partial r^2}+\frac{N+2\gamma-1}{r}\frac{\partial f}{\partial r}+\frac{1}{r^2}\Delta_{k,0}f\right] \diff\mu_k.$$

Using the orthogonality of the h-harmonics $\{Y_i^n\}$, as well as the fact that $Y_i^n$ are eigenfunctions of the operator $\Delta_{k,0}$ with eigenvalues $\lambda_n$, it follows that 
\begin{align*}
&\int_\Omega |\nabla_k f|^2  \diff\mu_k
\\
&\qquad
= - \Sumn_{n=0}^\infty \Sumn_{i=1}^{d(n)} \int_0^\delta \left[f_{n,i}(r)f_{n,i}''(r)+\frac{N+2\gamma-1}{r}f_{n,i}(r)f_{n,i}'(r)
+\lambda_n\frac{1}{r^2}f^2_{n,i}(r)\right] r^{N+2\gamma-1}\diff r 
\\
&\qquad
= - \Sumn_{n=0}^\infty \Sumn_{i=1}^{d(n)} \int_0^\delta \left[f_{n,i}(x)\Delta_k f_{n,i}(x)+\lambda_n\frac{f_{n,i}^2(x)}{|x|^2}\right]\diff x 
\\
&\qquad
= \Sumn_{n=0}^\infty \Sumn_{i=1}^{d(n)} \int_0^\delta \left[|\nabla_k f_{n,i}(x)|^2-\lambda_n\frac{f_{n,i}^2(x)}{|x|^2}\right]\diff x.
\end{align*}

Let $\Lambda := \left(\frac{N+2\gamma-2}{2}\right)^2$ denote the Hardy inequality constant. From the above, we have
\begin{align*}
\int_\Omega \left[|\nabla_k f|^2-\Lambda\frac{f^2}{|x|^2}\right] \diff\mu_k = \Sumn_{n=0}^\infty \Sumn_{i=1}^{d(n)} I_{n,i},
\end{align*}
where 
$$ I_{n,i}:= \int_\Omega \left[|\nabla_k f_{n,i}|^2-(\Lambda+\lambda_n)\frac{f_{n,i}^2(x)}{|x|^2}\right]\diff \mu_k.$$

When $n>0$, it can be checked by rearranging the terms that Hardy's inequality implies the following
$$ I_{n,i}\geq \frac{\lambda_n}{\lambda_n-\Lambda} \int_\Omega \left[ |\nabla_k f_{n,i}|^2 -\lambda_n \frac{f_{n,i}^2}{|x|^2}\right] \diff\mu_k.$$
Thus, we have that
\begin{align} \label{improvedhardyeqn1}
\Sumn_{n=1}^\infty \Sumn_{i=1}^{d(n)} I_{n,i}
\geq C_1 \Sumn_{n=1}^\infty \Sumn_{i=1}^{d(n)} \int_\Omega \left[ |\nabla_k f_{n,i}|^2 -\lambda_n \frac{f_{n,i}^2}{|x|^2}\right] \diff\mu_k
= C_1 \int_\Omega |\nabla_k(f-f_{0,1})|^2 \diff\mu_k,
\end{align}
where $C_1=\displaystyle\min_{n\geq 1} \frac{\lambda_n}{\lambda_n-\Lambda}>0$. Using the Sobolev inequality, we have that
\begin{align} \label{ineq1}
\int_\Omega |\nabla_k(f-f_{0,1})|^2 \diff\mu_k 
&\geq C_2 \left(\int_\Omega |f-f_{0,1}|^q \diff\mu_k\right)^{2/q} \nonumber \\
&\geq C_2 \left(\int_\Omega |f-f_{0,1}|^q X^{1+q/2}\left(\frac{|x|}{\delta}\right)\diff\mu_k\right)^{2/q},
\end{align}
for a constant $C_2>0$. Here, in the second inequality, we used the fact that $X$ is bounded above by $1$.

When $n=0$, we have that $d(0)=1$ and $\lambda_0=0$, so 
$$ I_{0,1}=\int_\Omega \left[ |\nabla_k f_{0,1}|^2-\Lambda \frac{f_{0,1}^2}{|x|^2}\right] \diff\mu_k. $$
Let $u(r)=r^{(N+2\gamma-2)/2} f_{0,1}$ so after an easy computation we find that
$$ I_{0,1} = \int_\Omega |x|^{-(N+2\gamma-1)} \left[ - (N+2\gamma-2) u(|x|)u'(|x|) + |x| u'(|x|)^2 \right] \diff\mu_k(x).$$
Using polar coordinates we then have 
$$ I_{0,1}= p(B_1) \int_0^\delta [- (N+2\gamma-2) u(r)u'(r)+r u'(r)^2] \diff r=p(B_1) \int_0^\delta r u'(r)^2 \diff r,$$
where $p(B_1) = \int_{\mathbb{S}^{N-1}} w_k(\theta) \diff \sigma(\theta)$ and in the last equality we used the fact that $u(0)=u(\delta)=0$. Applying Lemma \ref{improvedhardylemma}, this implies
\begin{align} \label{ineq2}
I_{0,1}
&\geq p(B_1) C_3 \left(\int_0^\delta \frac{|u|^q}{r} X^{1+q/2}\left(\frac{r}{\delta}\right) \diff r\right)^{2/q} \nonumber \\
&=p(B_1)^{1-2/q}C_3 \left(\int_\Omega|f_{0,1}|^q X^{1+q/2}\left(\frac{|x|}{\delta}\right) \diff \mu_k\right)^{2/q}.
\end{align}

Finally, from (\ref{improvedhardyeqn1}) we obtain
\begin{align*}
\Intr \left[|\nabla_k f|^2-\Lambda\frac{f^2}{|x|^2}\right] \diff\mu_k
&\geq I_{0,1}+C_1 \Intr |\nabla_k(f-f_{0,1})|^2 \diff\mu_k \\
&\geq C \left(\Intr |f|^q X^{1+q/2}(|x|) \diff\mu_k\right)^{2/q},
\end{align*}
for a constant $C>0$, where in the last line we used (\ref{ineq1}), (\ref{ineq2}), and the triangle inequality in the space $L^q(\mu_k)$.
\end{proof}

The following Corollary, which is a Dunkl equivalent of the original result of Brezis and Vazquez, is very important because it establishes a Poincar\'e inequality for Dunkl operators.

\begin{cor}
Let $\Omega \subset \RR^N$ be a bounded domain. Then there exists a constant $C(\Omega)>0$ such that for any $f\in C^\infty_0(\Omega)$ we have the inequality
$$ \int_\Omega |\nabla_k f|^2 \diff\mu_k \geq \frac{(N+2\gamma-2)^2}{4} \int_\Omega \frac{f^2}{|x|^2} \diff\mu_k + C(\Omega) \int_\Omega f^2 \diff\mu_k.$$
\end{cor}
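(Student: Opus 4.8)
The plan is to observe that the left-hand side of the desired inequality, after moving the Hardy term across, is exactly the quantity estimated from below in Theorem \ref{improvedhardy}. Writing $\Lambda = \left(\frac{N+2\gamma-2}{2}\right)^2$ and $\delta = \sup_{x\in\Omega}|x|$, Theorem \ref{improvedhardy} already supplies
$$\int_\Omega \left[|\nabla_k f|^2 - \Lambda \frac{f^2}{|x|^2}\right]\diff\mu_k \geq C\left(\int_\Omega |f|^q X^{1+q/2}\left(\frac{|x|}{\delta}\right)\diff\mu_k\right)^{2/q},$$
with $q = \frac{2(N+2\gamma)}{N+2\gamma-2}$. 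Thus it suffices to bound the right-hand side below by a multiple of $\int_\Omega f^2 \diff\mu_k$, which reduces the Poincar\'e inequality to a purely integral comparison between the weighted $L^q$ expression and the $L^2$ norm over the bounded domain $\Omega$.

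To carry this out I would apply H\"older's inequality with the conjugate exponents $\frac{q}{2}$ and $\frac{q}{q-2}$, both finite and positive since $q>2$ whenever $N+2\gamma>2$. Writing $a = 1 + \frac{q}{2}$ and factoring $|f|^2 = \left(|f|^2 X^{2a/q}\right)\cdot X^{-2a/q}$, H\"older gives
$$\int_\Omega f^2 \diff\mu_k \leq \left(\int_\Omega |f|^q X^{a}\left(\frac{|x|}{\delta}\right)\diff\mu_k\right)^{2/q}\left(\int_\Omega X^{-\frac{2a}{q-2}}\left(\frac{|x|}{\delta}\right)\diff\mu_k\right)^{\frac{q-2}{q}}.$$
Rearranging this lower bound on the weighted $L^q$ term and inserting it into the estimate from Theorem \ref{improvedhardy} yields the claim, with the explicit constant $C(\Omega) = C\left(\int_\Omega X^{-2a/(q-2)}(|x|/\delta)\diff\mu_k\right)^{-(q-2)/q}$, provided the remainder integral is finite and positive.

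The single point that needs genuine verification, and the main potential obstacle, is the finiteness of $\int_\Omega X^{-2a/(q-2)}(|x|/\delta)\diff\mu_k$. Since $X(t)^{-1} = 1-\log t$, this integrand equals $(1-\log(|x|/\delta))^{2a/(q-2)}$, which stays bounded away from the origin but blows up like a power of a logarithm as $x\to 0$. I would control it by passing to polar coordinates: because $w_k$ is homogeneous of degree $2\gamma$, the radial part of $\diff\mu_k$ carries the factor $r^{N+2\gamma-1}$, and since $N+2\gamma>2>0$ the integral $\int_0^\delta (1-\log(r/\delta))^{2a/(q-2)}\, r^{N+2\gamma-1}\diff r$ converges, as any positive power of $|\log r|$ is integrable against a positive power of $r$ near the origin. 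This guarantees $C(\Omega)>0$ and closes the argument; every other step is a direct substitution, so the logarithmic integrability estimate near $x=0$ is the only nonroutine ingredient.
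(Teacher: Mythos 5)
Your proposal is correct and follows essentially the same route as the paper: the paper's proof is exactly this H\"older argument with the splitting $f^2 = \bigl(f^2 X^{1+2/q}\bigr)\cdot X^{-1-2/q}$ and exponents $q/2$, $q/(q-2)$, combined with Theorem \ref{improvedhardy}. Your additional verification that $\int_\Omega (1-\log(|x|/\delta))^{2a/(q-2)}\diff\mu_k$ is finite (a logarithmic singularity against the radial weight $r^{N+2\gamma-1}$) is a detail the paper leaves implicit, and it is handled correctly.
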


\begin{proof}
This follows from the previous Theorem and H\"older's inequality applied to the function $f^2 X^{1+\frac{2}{q}} \in L^{q/2}$ and $X^{-1-\frac{2}{q}} \in L^{q/(q-2)}$.
\end{proof}

\begin{cor} [Poincar\'e inequality]
Let $\Omega \subset \RR^N$ be a bounded domain. Then there exists a constant $C(\Omega)>0$ such that for any $f\in C^\infty_0(\Omega)$ we have the inequality
$$ \int_\Omega |\nabla_k f|^2 \diff\mu_k \geq C(\Omega) \int_\Omega f^2 \diff\mu_k.$$
\end{cor}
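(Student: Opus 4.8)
The final statement is the Poincaré inequality: for a bounded domain $\Omega$ there is a constant $C(\Omega)>0$ with $\int_\Omega |\nabla_k f|^2 \diff\mu_k \geq C(\Omega) \int_\Omega f^2 \diff\mu_k$ for all $f\in C_0^\infty(\Omega)$. The plan is to deduce this immediately from the previous Corollary, which states that
$$ \int_\Omega |\nabla_k f|^2 \diff\mu_k \geq \frac{(N+2\gamma-2)^2}{4} \int_\Omega \frac{f^2}{|x|^2} \diff\mu_k + C(\Omega) \int_\Omega f^2 \diff\mu_k.$$
Since the weight $k\geq 0$ forces $2\gamma\geq 0$ and we are in the regime $N+2\gamma>2$ where the Hardy machinery operates, the Hardy constant $\frac{(N+2\gamma-2)^2}{4}$ is nonnegative. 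The measure $\mu_k$ is nonnegative and $\frac{f^2}{|x|^2}\geq 0$ pointwise, so the first term on the right-hand side is nonnegative. Dropping it yields exactly the claimed inequality with the same constant $C(\Omega)$.

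Concretely, I would write: by the preceding Corollary,
$$ \int_\Omega |\nabla_k f|^2 \diff\mu_k \geq \frac{(N+2\gamma-2)^2}{4} \int_\Omega \frac{f^2}{|x|^2} \diff\mu_k + C(\Omega) \int_\Omega f^2 \diff\mu_k \geq C(\Omega) \int_\Omega f^2 \diff\mu_k,$$
where the second inequality uses that both $\frac{(N+2\gamma-2)^2}{4}\geq 0$ and $\int_\Omega \frac{f^2}{|x|^2}\diff\mu_k\geq 0$. This is the entire argument.

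There is essentially no obstacle here: the result is a one-line consequence of the stronger Brezis–Vázquez-type Corollary, obtained simply by discarding a manifestly nonnegative term. The only thing worth a moment's care is confirming the sign of the discarded term — namely that the Hardy constant is nonnegative (guaranteed by $N+2\gamma>2$, or even just $\gamma\geq 0$ whenever $N\geq 2$) and that the singular integral $\int_\Omega \frac{f^2}{|x|^2}\diff\mu_k$ is well-defined and nonnegative, which it is since $f\in C_0^\infty(\Omega)$ and the integrand is nonnegative. No new estimates, symmetrisation, or h-harmonic decomposition are needed beyond what the previous Corollary already supplies.
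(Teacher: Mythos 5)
Your proposal is correct and matches the paper's (implicit) argument: the Poincar\'e inequality is stated as an immediate consequence of the preceding Brezis--V\'azquez-type corollary, obtained by discarding the nonnegative Hardy term $\frac{(N+2\gamma-2)^2}{4}\int_\Omega \frac{f^2}{|x|^2}\diff\mu_k$. The only cosmetic remark is that the constant is a perfect square, so its nonnegativity needs no appeal to $N+2\gamma>2$ at all.
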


\section{Other Hardy-type inequalities} \label{SEC:otherhardy}

In this section we present two results closely related to the Hardy inequality: the Rellich inequality and a Caffarelli-Kohn-Nirenberg inequality. 

\subsection{The Rellich Inequality}

This classical inequality, first proved by Rellich in \cite{Rellich0} (see also \cite{Rellich}), states that for all $f\in C^\infty_c(\RR^N \setminus \{0\})$ we have
$$ \IntN |\Delta f|^2 \diff x \geq \frac{N^2 (N-4)^2}{16} \IntN \frac{f^2}{|x|^4} \diff x,$$
where the constant is sharp. 

Here we prove the Dunkl analogue of this inequality. Our proof below uses the method of spherical h-harmonics already employed above to obtain an improved Hardy inequality, and it is similar in style to the original proof of Rellich.  

\begin{thm}[Rellich inequality] \label{rellich2}
Suppose that $N+2\gamma \neq 2$. Then, for any $f\in C_0^\infty(\RR^N \setminus \{ 0\})$, we have the inequality
$$ \IntN |\Delta_k f|^2 \diff\mu_k 
\geq \frac{(N+2\gamma)^2(N+2\gamma-4)^2}{16}\IntN \frac{f^2}{|x|^4} \diff\mu_k.$$ 
The constant in this inequality is sharp.
\end{thm}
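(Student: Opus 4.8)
The plan is to follow the spherical $h$-harmonic method already used in the proof of Theorem~\ref{improvedhardy}, decomposing $f$ into its $h$-harmonic components and reducing the problem to a family of one-dimensional inequalities in the radial variable. Writing $f(r\xi) = \sum_{n=0}^\infty \sum_{i=1}^{d(n)} f_{n,i}(r) Y_i^n(\xi)$ and using the polar form \eqref{hharmonicslaplacian} of $\Delta_k$ together with the fact that $Y_i^n$ are eigenfunctions of $\Delta_{k,0}$ with eigenvalues $\lambda_n = -n(n+N+2\gamma-2)$, I would first compute
\begin{equation*}
\Delta_k f = \sum_{n,i} \left[ f_{n,i}''(r) + \frac{N+2\gamma-1}{r} f_{n,i}'(r) + \frac{\lambda_n}{r^2} f_{n,i}(r)\right] Y_i^n(\xi).
\end{equation*}
By the orthonormality of the $Y_i^n$ in $L^2(\mathbb{S}^{N-1}, w_k \diff\sigma)$ and Plancherel on the sphere, the left-hand side $\int |\Delta_k f|^2 \diff\mu_k$ splits as a sum over $(n,i)$ of one-dimensional integrals against the radial weight $r^{N+2\gamma-1}\diff r$, and likewise $\int |f|^2/|x|^4 \diff\mu_k$. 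This reduces the theorem to establishing, for each fixed $n$, a one-dimensional weighted Rellich inequality for the radial profile $g = f_{n,i}$.

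The heart of the matter is then the scalar inequality: setting $D = N+2\gamma$ and $L_n g := g'' + \tfrac{D-1}{r} g' + \tfrac{\lambda_n}{r^2} g$, I must show
\begin{equation*}
\int_0^\infty |L_n g|^2 \, r^{D-1} \diff r \geq c_n \int_0^\infty \frac{g^2}{r^4}\, r^{D-1}\diff r
\end{equation*}
with $c_n$ bounded below by the claimed constant $\frac{D^2(D-4)^2}{16}$, the minimum being attained at $n=0$. The natural route, mirroring Rellich's original argument, is to test radial power functions $g = r^\beta$ (formally), compute how $L_n$ acts on them, and optimize; concretely, on monomials $L_n r^\beta = [\beta(\beta-1) + (D-1)\beta + \lambda_n]\, r^{\beta-2} = [(\beta + \tfrac{D-2}{2})^2 - (\tfrac{D-2}{2})^2 + \lambda_n]\, r^{\beta-2}$. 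This suggests diagonalizing via the substitution used for $I_{0,1}$ in Theorem~\ref{improvedhardy}, namely $u(r) = r^{(D-2)/2} g(r)$, which converts the weighted problem into a constant-coefficient one and exposes the spectral quantity $(\tfrac{D-2}{2})^2 - \lambda_n = (\tfrac{D-2}{2})^2 + n(n+D-2)$. I would then expand the square $\int |L_n g|^2 r^{D-1}\diff r$, integrate by parts to move derivatives, and show each cross term either vanishes (by the compact support away from $0$) or combines into a manifestly nonnegative quantity plus the desired multiple of $\int g^2 r^{D-5}\diff r$.

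The main obstacle I anticipate is the sharpness claim and the verification that the infimum over $n$ of the constants $c_n$ is genuinely achieved at $n=0$, giving exactly $\frac{D^2(D-4)^2}{16}$. For $n=0$ the two relevant exponents from the characteristic equation $(\beta + \tfrac{D-2}{2})^2 = (\tfrac{D-2}{2})^2$ are $\beta = 0$ and $\beta = -(D-2)$, and applying $L_0$ twice produces the factor $\frac{D^2(D-4)^2}{16}$ precisely when one tracks both the radial Hardy-type constant at the level of $g$ and its derivative; for $n \geq 1$ the additional positive contribution $n(n+D-2)$ only increases $c_n$, so one must check the monotonicity carefully rather than assume it. For sharpness I would construct, as in the proof of Theorem~\ref{hardy}, a sequence of radial trial functions $f_m$ built from truncated and regularized powers $r^{-(D-4)/2}$-type profiles whose Rayleigh quotient $\int|\Delta_k f_m|^2\diff\mu_k \,/\, \int |f_m|^2 |x|^{-4}\diff\mu_k$ converges to $\frac{D^2(D-4)^2}{16}$; the delicate points there are the behavior near the origin (handled by the excision $f \in C_0^\infty(\RR^N\setminus\{0\})$) and confirming the boundary terms from the repeated integration by parts vanish in the limit.
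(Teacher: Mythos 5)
Your plan follows essentially the same route as the paper's proof: the $h$-harmonic decomposition reducing the claim to a one-dimensional weighted inequality in each sector $n$, a power-weight substitution followed by integration by parts that exhibits the constant $\tfrac{(N+2\gamma)^2(N+2\gamma-4)^2}{16}$ plus manifestly nonnegative remainder terms, and sharpness via truncated powers $r^{-(N+2\gamma-4)/2}$. Two small adjustments: the substitution should be $v=r^{(N+2\gamma-4)/2}f_{n,i}$ (so that the target term becomes $\int v^2 r^{-1}\diff r$), not $r^{(N+2\gamma-2)/2}$; and the per-sector check you rightly flag as delicate is exactly where the hypothesis $N+2\gamma\neq 2$ enters, since the zeroth-order remainder coefficient is $n(n+N+2\gamma-2)\bigl[n(n+N+2\gamma-2)+\tfrac{(N+2\gamma)(N+2\gamma-4)}{2}\bigr]$, whose nonnegativity for all $n$ makes the same constant work in every sector rather than requiring a minimization over $n$.
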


\begin{proof}
Consider the expansion of $f$ in terms of spherical h-harmonics
$$ f(x)=\Sumn_{n=0}^\infty\Sumn_{i=1}^{d(n)} f_{n,i}(r)Y_i^n(\xi).$$
We then have
\begin{align*}
\Delta_k f(x) = \Sumn_{n=0}^\infty\Sumn_{i=1}^{d(n)} \left[f''_{n,i}(r)+\frac{N+2\gamma-1}{r}f'_{n,i}(r)+\frac{\lambda_n}{r^2}f_{n,i}(r)\right] Y_i^n(\xi).
\end{align*}
In order to simplify the computations below, we introduce the notation $\overline{N}:=N+2\gamma$. Thus, from the orthogonality properties of $\{ Y_{n,i}\}$ we have
\begin{align*}
\IntN (\Delta_k f)^2 \diff\mu_k 
&= \Sumn_{n=0}^\infty\Sumn_{i=1}^{d(n)} \int_0^\infty \left[f''_{n,i}(r)+\frac{\overline{N}-1}{r}f'_{n,i}(r)+\frac{\lambda_n}{r^2}f_{n,i}(r)\right]^2 r^{\overline{N}-1}\diff r.
\end{align*}
Expanding the brackets and computing the terms containing products of mixed derivatives using integration by parts (recall that $f_{n,i}$ has compact support away from 0), we have
\begin{align*}
\IntN (\Delta_k f)^2 \diff\mu_k
&= \Sumn_{n=0}^\infty\Sumn_{i=1}^{d(n)} \int_0^\infty \left[(f''_{n,i})^2r^{\overline{N}-1}
+(\overline{N}-2\lambda_n-1)(f'_{n,i})^2r^{\overline{N}-3} \right.
\\
& \qquad \qquad \qquad \qquad 
\left. +\lambda_n(\lambda_n-2(\overline{N}-4))f_{n,i}^2r^{\overline{N}-5}\right]\diff r.
\end{align*}

Fix now some $n=0,1,\ldots$, and some $i=1,\ldots, d(n)$, and define
$$ g_{n,i}=r^{(\overline{N}-4)/2}f_{n,i}.$$
To simplify notation, let $u=f_{n,i}$ and $v=g_{n,i}$; let also $c=\overline{N}-2\lambda_n-1$. We then have
\begin{equation} \label{rellich2relations}
\begin{aligned}
u'(r)&=\frac{4-\overline{N}}{2}r^{(2-\overline{N})/2}v+r^{(4-\overline{N})/2}v'
\\
u''(r)&=\frac{(4-\overline{N})(2-\overline{N})}{4}r^{-\overline{N}/2}v+(4-\overline{N})r^{(2-\overline{N})/2}v'+r^{(4-\overline{N})/2}v''.
\end{aligned}
\end{equation}

Thus, using these relations and integrating by parts the terms involving mixed products of derivatives of $v$, we have 
\begin{align*}
\Intr_0^\infty &\left[(f''_{n,i})^2r^{\overline{N}-1}
+(\overline{N}-2\lambda_n-1)(f'_{n,i})^2r^{\overline{N}-3}\right] \diff r
 = \Intr_0^\infty \left[(u'')^2r^{\overline{N}-1}+c(u')^2r^{\overline{N}-3}\right]^2 \diff r
\\
&= \Intr_0^\infty \left[(v'')^2 r^3 
+ ((4-\overline{N})^2+c)(v')^2 r
+\frac{(4-\overline{N})^2}{4}\left(\frac{(2-\overline{N})^2}{4}+c\right)v^2r^{-1}\right.
\\
&\quad\quad\qquad + 2\frac{4-\overline{N}}{2}\left(\frac{(4-\overline{N})(2-\overline{N})}{2}+c\right)vv'
\\
&\qquad\qquad+\left.2\frac{(4-\overline{N})(2-\overline{N})}{4}vv''r+2(4-\overline{N})v'v'' r^2\right] \diff r
\\
&=\Intr \left[(v'')^2r^3
+\left(c+\frac{(4-\overline{N})(2-\overline{N})}{2}\right)(v')^2r
+\frac{(4-\overline{N})^2}{4}\left(\frac{(2-\overline{N})^2}{4}+c\right)v^2r^{-1}\right] \diff r
\\
&=\frac{\overline{N}^2(4-\overline{N})^2}{16}\Intr_0^\infty v^2r^{-1}\diff r
\\
&\qquad + \Intr_0^\infty \left[(v'')^2r^3
+\left(\frac{1}{2}(\overline{N}-2)^2+1-2\lambda_n\right)(v')^2r-\lambda_n\frac{(4-\overline{N})^2}{2}v^2r^{-1}\right] \diff r.
\end{align*}
It then follows that
\begin{align*}
\Intr_0^\infty &((f''_{n,i})^2r^{\overline{N}-1}
+(\overline{N}-2\lambda_n-1)(f'_{n,i})^2r^{\overline{N}-3}
+\lambda_n(\lambda_n-2(\overline{N}-4))f_{n,i}^2r^{\overline{N}-5}) \diff r
\\
&\geq \frac{\overline{N}^2(4-\overline{N})^2}{16}\Intr_0^\infty v^2r^{-1}\diff r 
+ \Intr_0^\infty \left[(v'')^2r^3
+A (v')^2r + Bv^2r^{-1}\right] \diff r,
\end{align*}
where
\begin{align*}
A&=\frac{1}{2}(\overline{N}-2)^2+1-2\lambda_n =  2n(n+\overline{N}-2) + \frac{1}{2}(\overline{N}-2)^2+1
\\
B&= \lambda_n(\lambda_n-2(\overline{N}-4))-\lambda_n\frac{(4-\overline{N})^2}{2}
= n(n+\overline{N}-2) \left[ n(n+\overline{N}-2) + \frac{\overline{N}}{2}(\overline{N}-4) \right].
\end{align*}
It is then clear that $A \geq 0$ for all $n=0,1,\ldots$ without any restrictions on $\overline{N}$, whilst $B \geq 0$ for all $n$ as long as $\overline{N}\neq 2$ (which is why we made this assumption). Finally, we have obtained
\begin{align*}
\Intr_0^\infty ((f''_{n,i})^2r^{\overline{N}-1}
+&(\overline{N}-2\lambda_n-1)(f'_{n,i})^2r^{\overline{N}-3}
+\lambda_n(\lambda_n-2(\overline{N}-4))f_{n,i}^2r^{\overline{N}-5}) \diff r
\\
&\geq \frac{\overline{N}^2(4-\overline{N})^2}{16}\Intr_0^\infty v^2r^{-1}\diff r 
= \frac{\overline{N}^2(4-\overline{N})^2}{16} \Intr_0^\infty f^2_{n,i} r^{\overline{N}-5} \diff r.
\end{align*} 

Adding these up for all $n=0, 1, \ldots$ and $i=1,\ldots, d(n)$, from the above, and reconstructing $f$ back from its spherical h-harmonics components, we have obtained that 
$$ \IntN (\Delta_k f)^2 \diff\mu_k 
\geq \frac{(N+2\gamma)^2(N+2\gamma-4)^2}{16} \IntN \frac{f^2}{|x|^4} \diff\mu_k,$$
as required. 

To check that the constant is sharp we can use a similar example as in the classical case. More precisely, for $n=3,4 \ldots$ let $f_n(x)=|x|^{2-\frac{N+2\gamma}{2}} h_n(|x|)$, where 
$$h_n : [0,\infty) \to [0,1] \quad \text{ is such that } \quad h_n(r)=
\begin{cases}
0 & \text{ if } r \leq 1 \\
1 & \text{ if } 2\leq r \leq n \\
0 & \text{ if } r\geq 2n,
\end{cases}
$$
with derivatives satisfying 
$$|h_n'| \leq \frac{c_1}{n} \quad \text{ and }  \quad |h_n''| \leq \frac{c_2}{n^2}$$
for some constants $c_1,c_2>0$ (an explicit such $h_n$ can be found in \cite{Rellich}). Then we can compute
$$ \IntN |\nabla_k f_n|^2 \diff\mu_k = C_1 + p(B_1)\frac{(N+2\gamma)^2(N+2\gamma-4)^2}{16} \int_2^{2n} \frac{1}{r} \diff r$$
and 
$$ \IntN \frac{f_n^2}{|x|^4} \diff\mu_k = C_2 + p(B_1) \int_2^{2n} \frac{1}{r} \diff r,$$
where $C_1$ and $C_2$ can be bounded by constants that do not depend on $n$. Thus
$$ \displaystyle\lim_{n\to\infty} \frac{\IntN |\nabla_k f_n|^2 \diff\mu_k}{\IntN \frac{f_n^2}{|x|^4} \diff\mu_k} = \frac{(N+2\gamma)^2(N+2\gamma-4)^2}{16}.$$
\end{proof}

\subsection{The Caffarelli-Kohn-Nirenberg Inequality}

The Caffarelli-Kohn-Nirenberg inequality \cite{CKN} is
$$ \left(\IntN \frac{|f|^p}{|x|^{pb}} \diff x \right)^{1/p} \leq C \left( \IntN \frac{|\nabla f|^r}{|x|^{ra}} \diff x \right)^{\theta/r} \left( \IntN \frac{|f|^q}{|x|^{qc}} \diff x \right)^{(1-\theta)/q}.$$ 
In \cite{CKN} necessary and sufficient conditions on the parameters $p,q,r,a,b,c,\theta$ are given for which the above inequality holds for all $f\in C^\infty_c(\RR^N)$. We prove here the Dunkl analogue of a particular case of this inequality, corresponding to the values $r=2$ and $\theta=1$. In this special case the inequality was known before the work of Caffarelli, Kohn and Nirenberg, see for example \cite{Ilin}, and it it sometimes known as the Hardy-Sobolev inequality as it generalises both these results (which correspond to the values $a=0$, $b=0$, and $a=0$, $b=1$, respectively). Best constants are also known, see \cite[Corollary 4.8]{Mazya}. Our proof below is inspired by the method of \cite{GR}.

\begin{thm} \label{caffarelli-kohn-nirenberg}
Let $a\leq b \leq a+1$ and $p=\frac{2(N+2\gamma)}{N+2\gamma-2+2(b-a)}$, and suppose that $a<\frac{N+2\gamma-2}{2}$. Then, for any $f\in C_c^\infty(\RR^N)$ we have the inequality
$$ \int_{\RR^N} \frac{|\nabla_k f|^2}{|x|^{2a}} \diff\mu_k \geq C_{a,b} \left(\int_{\RR^N} \frac{|f|^p}{|x|^{pb}} \diff\mu_k\right)^{2/p},$$
where $C_{a,b}>0$ is a constant. 
\end{thm}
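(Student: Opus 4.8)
The plan is to strip the weight off the gradient by the substitution $g=|x|^{-a}f$, and then obtain the resulting ``clean-gradient'' inequality by interpolating between the Hardy inequality of Theorem~\ref{hardy} and the Sobolev inequality. Since $f=|x|^a g$ we have $\int_{\RR^N}\frac{|f|^p}{|x|^{pb}}\diff\mu_k=\int_{\RR^N}\frac{|g|^p}{|x|^{p(b-a)}}\diff\mu_k$, and because $|x|^{\pm a}$ is $G$-invariant the Leibniz rule applies to it, so this change of variables is workable.

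First I would carry out the reduction. From $\nabla_k(|x|^a g)=|x|^a\nabla_k g+a|x|^{a-2}xg$ one gets, after dividing by $|x|^{2a}$ and integrating,
\begin{equation*}
\int_{\RR^N}\frac{|\nabla_k f|^2}{|x|^{2a}}\diff\mu_k
=\int_{\RR^N}|\nabla_k g|^2\diff\mu_k
+2a\int_{\RR^N}\frac{g\,\langle x,\nabla_k g\rangle}{|x|^2}\diff\mu_k
+a^2\int_{\RR^N}\frac{g^2}{|x|^2}\diff\mu_k.
\end{equation*}
The cross term is handled by the general product rule, $\langle x,\nabla_k(g^2)\rangle=2g\langle x,\nabla_k g\rangle-\Suma k_\alpha(g(x)-g(\sigma_\alpha x))^2$, together with integration by parts and the identity $\sum_i T_i(|x|^{-2}x_i)=(N+2\gamma-2)|x|^{-2}$ (a short computation in the spirit of $\sum_i T_i(x_i)=N+2\gamma$ from Theorem~\ref{hardytype}), giving
\begin{equation*}
2\int_{\RR^N}\frac{g\,\langle x,\nabla_k g\rangle}{|x|^2}\diff\mu_k
=-(N+2\gamma-2)\int_{\RR^N}\frac{g^2}{|x|^2}\diff\mu_k
+\Suma k_\alpha\int_{\RR^N}\frac{(g(x)-g(\sigma_\alpha x))^2}{|x|^2}\diff\mu_k.
\end{equation*}
The last, purely Dunkl, term is nonnegative; for $a\geq0$ it may be discarded, leaving $\int_{\RR^N}\frac{|\nabla_k f|^2}{|x|^{2a}}\diff\mu_k\geq\int_{\RR^N}|\nabla_k g|^2\diff\mu_k-a(N+2\gamma-2-a)\int_{\RR^N}\frac{g^2}{|x|^2}\diff\mu_k$. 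Absorbing the negative term with Theorem~\ref{hardy} in the form $\int_{\RR^N}\frac{g^2}{|x|^2}\diff\mu_k\leq\frac{4}{(N+2\gamma-2)^2}\int_{\RR^N}|\nabla_k g|^2\diff\mu_k$ yields
\begin{equation*}
\int_{\RR^N}\frac{|\nabla_k f|^2}{|x|^{2a}}\diff\mu_k
\geq c_a\int_{\RR^N}|\nabla_k g|^2\diff\mu_k,
\qquad
c_a=\frac{(a-\frac{N+2\gamma-2}{2})^2}{(\frac{N+2\gamma-2}{2})^2},
\end{equation*}
and $c_a>0$ precisely because $a\neq\frac{N+2\gamma-2}{2}$, which is exactly the content of the hypothesis $a<\frac{N+2\gamma-2}{2}$.

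It then remains to prove $\int_{\RR^N}|\nabla_k g|^2\diff\mu_k\geq C(\int_{\RR^N}\frac{|g|^p}{|x|^{p(b-a)}}\diff\mu_k)^{2/p}$. Writing $\tau=b-a\in[0,1]$, I would interpolate the integrand $\frac{|g|^p}{|x|^{p\tau}}$ between the Hardy integrand $\frac{g^2}{|x|^2}$ and the Sobolev integrand $|g|^q$, where $q=\frac{2(N+2\gamma)}{N+2\gamma-2}$. H\"older's inequality with the exponents $\alpha=\frac{p\tau}{2}$ and $\beta=1-\frac{p\tau}{2}$, which lie in $[0,1]$ for $\tau\in[0,1]$ and are pinned down by the matching relations $2\alpha+q\beta=p$ and $2\alpha=p\tau$, gives
\begin{equation*}
\int_{\RR^N}\frac{|g|^p}{|x|^{p\tau}}\diff\mu_k
\leq\left(\int_{\RR^N}\frac{g^2}{|x|^2}\diff\mu_k\right)^{\alpha}
\left(\int_{\RR^N}|g|^q\diff\mu_k\right)^{\beta}.
\end{equation*}
Bounding the first factor by the Hardy inequality and the second by the Sobolev inequality $\norm{g}_q\leq C\norm{\nabla_k g}_2$, and using $\alpha+\frac{q\beta}{2}=\frac{p}{2}$, the right-hand side collapses to a constant multiple of $(\int_{\RR^N}|\nabla_k g|^2\diff\mu_k)^{p/2}$; taking $p$-th roots gives the clean-gradient inequality. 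Combining with the reduction and rewriting $g=|x|^{-a}f$ finishes the proof.

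The main obstacle is the reduction step, and specifically the cross term: unlike the classical case it produces the extra nonnegative Dunkl difference term $\Suma k_\alpha\int_{\RR^N}\frac{(g(x)-g(\sigma_\alpha x))^2}{|x|^2}\diff\mu_k$, and the argument hinges on controlling it by its sign in combination with the sign of $a$. The positivity of the effective constant $c_a$ is the exact place where $a<\frac{N+2\gamma-2}{2}$ enters, mirroring the role of $N+2\gamma>2$ in Theorem~\ref{hardy}. By contrast the interpolation step is routine exponent bookkeeping once the correct H\"older weights are identified.
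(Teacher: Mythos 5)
Your argument is correct (for $a\geq 0$) and follows the same overall strategy as the paper: substitute $g=|x|^{-a}f$, use the Hardy and Sobolev inequalities as the two endpoints, and interpolate with H\"older using exactly the exponents $\alpha=p\tau/2$, $\beta=1-p\tau/2$ that the paper uses in its Step 3. The two genuine differences are in the organisation and in the treatment of the cross term. You compute $2a\int |x|^{-2} g\langle x,\nabla_k g\rangle\,\diff\mu_k$ \emph{exactly} by the non-$G$-invariant product rule and integration by parts (your identity $\sum_i T_i(x_i|x|^{-2})=(N+2\gamma-2)|x|^{-2}$ checks out, using $|\alpha|^2=2$), which surfaces the reflection term $a\Suma k_\alpha\int|x|^{-2}(g(x)-g(\sigma_\alpha x))^2\diff\mu_k$; you then discard it by its sign, and this is precisely where your proof becomes restricted to $a\geq 0$. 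The paper instead estimates the cross term by Young's inequality $2xy\geq-\epsilon x^2-\epsilon^{-1}y^2$, which never sees the reflections; after optimising $\epsilon$ it lands on the same constant $(a-\tfrac{N+2\gamma-2}{2})^2$ for the $b=a+1$ endpoint, and its choice $\epsilon=\tfrac{N+2\gamma-2}{2a}$ likewise presupposes $a>0$, so neither argument as written covers negative $a$ even though the hypothesis $a<\tfrac{N+2\gamma-2}{2}$ permits it --- you should at least flag this, or supply a separate treatment of the reflection term (e.g.\ via $|\langle\alpha,x\rangle|^2\leq 2|x|^2$ and the Dirichlet-form decomposition of $\norm{\nabla_k g}_2^2$) for $a<0$. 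The second difference is that you establish one ``clean-gradient'' inequality $\int|x|^{-2a}|\nabla_k f|^2\diff\mu_k\geq c_a\int|\nabla_k g|^2\diff\mu_k$ and interpolate the right-hand side, whereas the paper proves the two endpoint inequalities for $f$ itself and interpolates those; your version is slightly tidier since the weight is removed once and for all, while the paper's Step 2 has to re-use its Step 1 to absorb the extra term coming from the Leibniz rule. Both routes deliver the same constants, and your exponent bookkeeping ($2\alpha+q\beta=p$, $\alpha+q\beta/2=p/2$) is consistent with the stated value of $p$.
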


\begin{proof}
The strategy of the proof is to establish the inequality in the end cases $b=a+1$ and $b=a$ separately, and then to interpolate between these two cases to obtain the result in full generality.

\textbf{Step 1.} Suppose $b=a+1$, so $p=2$. We begin by considering the function $u=\frac{f}{|x|^{a}}$, so 
\begin{equation} \label{leibnizckn}
T_i u = \frac{T_i f}{|x|^{a}} - a \frac{x_i}{|x|^{a+2}} f.
\end{equation}
Then we have
\begin{equation} \label{ckn1}
\begin{aligned}
\int_{\RR^N} \frac{|\nabla_k f|^2}{|x|^{2a}} \diff\mu_k
&= \int_{\RR^N} \left| \nabla_k u + a \frac{x}{|x|^{a+2}} f \right|^2 \diff\mu_k 
\\
&= \int_{\RR^N} |\nabla_k u|^2 \diff\mu_k 
	+ a^2 \int_{\RR^N} \frac{f^2}{|x|^{2a+2}} \diff\mu_k 
	+ 2a \int_{\RR^N} \frac{f}{|x|^{a+2}} \langle x, \nabla_k u \rangle \diff\mu_k.
\end{aligned}
\end{equation}

Let $\epsilon > 1$. Applying the inequality $2xy \geq -\epsilon x^2 - \frac{1}{\epsilon} y^2$, we can estimate the last term on the right hand side of the previous equality 
$$ 2a \int_{\RR^N} \frac{f}{|x|^{a+2}} \langle x, \nabla_k u \rangle\diff\mu_k 
\geq -\epsilon a^2 \int_{\RR^N} \frac{f^2}{|x|^{2a+2}} \diff\mu_k - \frac{1}{\epsilon} \int_{\RR^N} |\nabla_k u|^2 \diff\mu_k.$$
Plugging this in (\ref{ckn1}), we have obtained 
$$ \int_{\RR^N} \frac{|\nabla_k f|^2}{|x|^{2a}} \diff\mu_k
\geq a^2(1-\epsilon) \int_{\RR^N} \frac{f^2}{|x|^{2a+2}} \diff\mu_k
+ (1-\frac{1}{\epsilon}) \int_{\RR^N} |\nabla_k u|^2 \diff\mu_k.$$
Applying Hardy's inequality to the last term on the right hand side of the above inequality, we have
$$ \int_{\RR^N} \frac{|\nabla_k f|^2}{|x|^{2a}} \diff\mu_k
\geq \left(a^2(1-\epsilon)+(1-\frac{1}{\epsilon})\frac{(N+2\gamma-2)^2}{4}\right) \int_{\RR^N} \frac{f^2}{|x|^{2a+2}} \diff\mu_k.$$
This holds for all $\epsilon>1$ and since $a<\frac{N+2\gamma-2}{2}$, we can choose for example $\epsilon=\frac{N+2\gamma-2}{2a}$ to obtain a positive constant.

\textbf{Step 2.} Suppose now that $a=b$. In this case $p=q:=\frac{2(N+2\gamma)}{N+2\gamma-2}$, the Sobolev coefficient. Using the Sobolev inequality we have
$$ \left(\int_{\RR^N} \frac{|f|^p}{|x|^{pa}} \diff\mu_k\right)^{2/p} 
\leq C \int_{\RR^N} \left|\nabla_k \left( \frac{f}{|x|^a} \right)\right|^2 \diff\mu_k. $$
Using (\ref{leibnizckn}), we obtain
$$ \left|\nabla_k \left( \frac{f}{|x|^a} \right)\right|^2
= \left| \frac{\nabla_k f}{|x|^a} - a \frac{x}{|x|^{a+2}}f \right|^2 \leq 2\frac{|\nabla_k f|^2}{|x|^{2a}} + 2a^2 \frac{f^2}{|x|^{2a+2}}.$$

Thus, from the last two relations it follows that
\begin{align*}
\left(\int_{\RR^N} \frac{|f|^p}{|x|^{pa}} \diff\mu_k\right)^{2/p} 
&\leq 2Ca^2 \int_{\RR^N} \frac{|f|^2}{|x|^{2a+2}} \diff\mu_k 
+ 2C \int_{\RR^N} \frac{|\nabla_k f|^2}{|x|^{2a}} \diff\mu_k
\\
&\leq 2C (a^2 C_{a,a+1}^{-1} +1) \int_{\RR^N} \frac{|\nabla_k f|^2}{|x|^{2a}} \diff\mu_k,
\end{align*}
where we used the previous step.

\textbf{Step 3.} We now look at the case $a<b<a+1$. As above, let $q=\frac{2(N+2\gamma)}{N+2\gamma-2}$ be the Sobolev coefficient. We have $2<p<q$, so there exists $\theta\in(0,1)$ such that
$$ p=2\theta+q(1-\theta),$$
so 
$$ b= a + \frac{\theta (N+2\gamma-2)}{N+2\gamma-2\theta},$$
and also
$$pb=2(a+1)\theta + qa(1-\theta).$$
Then, using H\"older's inequality, we obtain
\begin{align*}
\int_{\RR^N} \frac{|f|^p}{|x|^{pb}} \diff\mu_k 
&=\int_{\RR^N} \frac{|f|^{2\theta+q(1-\theta)}}{|x|^{2(a+1)\theta + qa(1-\theta)}} \diff\mu_k 
\\
&\leq \left(\int_{\RR^N} \frac{|f|^2}{|x|^{2(a+1)}} \diff\mu_k\right)^\theta 
\left( \int_{\RR^N} \frac{|f|^q}{|x|^{qa}} \diff\mu_k \right)^{1-\theta}.
\end{align*}
Using the two steps above, this implies
$$ \int_{\RR^N} \frac{|f|^p}{|x|^{pb}} \diff\mu_k
\leq C_{a,a+1}^{-\theta}C_{a,a}^{-q(1-\theta)/2} 
\left(\int_{\RR^N} \frac{|\nabla_k f|^2}{|x|^{2a}} \diff\mu_k\right)^{p/2},$$
as required. This completes the proof.
\end{proof}

\begin{remark}
One could prove a more general Caffarelli-Kohn-Nirenberg inequality of the form
$$ \left( \int_{\RR^N} \frac{|f|^p}{|x|^{pb}} \diff\mu_k \right)^{1/p} 
\leq C \left( \int_{\RR^N} \frac{|\nabla_k f|^2}{|x|^{2a}} \diff\mu_k \right)^{\theta/2} \left( \int_{\RR^N} \frac{|f|^q}{|x|^{qc}} \diff\mu_k \right)^{(1-\theta)/q},$$
which holds for all $f\in C_c^\infty (\RR^N\setminus \{0\})$, subject to the assumption that
$$\frac{1}{p}-\frac{b}{N+2\gamma}=\theta \left( \frac{1}{2} -\frac{a+1}{N+2\gamma}\right) + (1-\theta) \left( \frac{1}{q} - \frac{c}{N+2\gamma}\right),$$
where $b=(1-\theta)c+\theta d$, for parameters $p,q,a,c,d,\theta \in \RR$ such that $p>0$, $q\geq 1$, $\theta\in [0,1]$, and such that all integrals above are finite. This could be achieved by interpolating using H\"older's inequality between the case $\theta=0$ (which is trivial as $p=q$, $b=c$, and both sides reduce to $\norm{|x|^{-b}f}_p$), and the case $\theta=1$, which was done in the previous Theorem. However, this only works for a more restricted and rather complicated range of $\theta$ depending on $p$ and $q$.
\end{remark}

\section{Many-particle Hardy inequalities}

In this section we exploit the connection between Dunkl operators and generalised Calogero-Moser-Sutherland models to obtain an application to many-particle Hardy inequalities in dimension $d=1$. The generalised CMS Hamiltonian is defined for any root system $R$ as
\begin{equation} \label{Fkexpression} \mathcal{F}_k g(x):= \Delta g(x) - 2 \Suma \frac{k_\alpha}{\alx^2} (k_\alpha g(x) - g(\sigma_\alpha x)).
\end{equation}
It is known (see \cite{Rosler2000}) that this is connected to the Dunkl Laplacian via the relation
\begin{equation} \label{Dunkl-CMS} 
w_k^{-1/2} \mathcal{F}_k w_k^{1/2} = \Delta_k.
\end{equation}
This follows simply from \eqref{Dunkllaplacian} and the identity (see \cite{Dunkl1989})
$$ \sum_{\substack{\alpha,\beta \in R_+ \\ \alpha\neq \beta}} k_\alpha k_\beta \frac{\langle\alpha, \beta \rangle}{\alx \langle \beta, x \rangle} = 0 \qquad \forall x\in\RR^N.$$

The main result of this section is the following.

\begin{thm} \label{generalisedmultiHardy}
Let $\Omega = \RR^N \setminus \bigcup_{\alpha\in R_+}\left\{x\in \RR^N : \alx =0\right\}$. Then, for any $f\in C_c^\infty (\Omega; \CC)$ and for any $k_\alpha \geq 0$, we have the inequality
\begin{equation} \label{manyparticleHardy}
\IntN |\nabla f|^2 \diff x 
\geq 2 \IntN |f|^2 \Suma \frac{k_\alpha-k_\alpha^2}{\alx^2} \diff x + \frac{(N+2\gamma-2)^2}{4} \IntN \frac{|f|^2}{|x|^2} \diff x.
\end{equation}
In particular, by taking $k_\alpha=\frac{1}{2}$ for all $\alpha\in R_+$, we have 
\begin{equation} \label{manyparticleHardyoptimal}
\IntN |\nabla f|^2 \diff x 
\geq \frac{1}{2} \IntN |f|^2 \Suma \frac{1}{\alx^2} \diff x + \frac{(N+|R_+|-2)^2}{4} \IntN \frac{|f|^2}{|x|^2} \diff x.
\end{equation}
\end{thm}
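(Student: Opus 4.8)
The plan is to apply the Euclidean Hardy inequality in the weighted space $L^2(\mu_k)$,
$$\IntN |\nabla h|^2 \diff\mu_k \geq \frac{(N+2\gamma-2)^2}{4} \IntN \frac{|h|^2}{|x|^2} \diff\mu_k,$$
to the transformed function $h = w_k^{-1/2} f$. This inequality is obtained inside the proof of Theorem \ref{hardy} by applying \eqref{hardytype1} to $h/|x|$ (the step that precedes, and does not use, Lemma \ref{dunkl-euclidean-dirichletform}). Since $f \in C_c^\infty(\Omega;\CC)$ is supported away from every hyperplane $\{\alx=0\}$, the weight $w_k^{-1/2}$ is smooth and positive there, so $h \in C_c^\infty(\Omega;\CC) \subset C_c^\infty(\RR^N;\CC)$ and the inequality applies. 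Its right-hand side is immediate, since $|h|^2 = w_k^{-1}|f|^2$ and $\diff\mu_k = w_k\diff x$ give $\IntN \frac{|h|^2}{|x|^2}\diff\mu_k = \IntN \frac{|f|^2}{|x|^2}\diff x$. The whole content of the proof is therefore to rewrite the left-hand side in terms of $f$; note that because I will work throughout with $|f|^2$ and with the real vector field $\nabla\log w_k$, no separate reduction to real-valued $f$ is needed.

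For this I would write $\nabla h = w_k^{-1/2}\bigl(\nabla f - \tfrac12 f\,\nabla\log w_k\bigr)$, so that
$$\IntN |\nabla h|^2 \diff\mu_k = \IntN |\nabla f|^2 \diff x - \IntN \langle \nabla(|f|^2),\, \tfrac12\nabla\log w_k\rangle \diff x + \tfrac14 \IntN |f|^2\,|\nabla\log w_k|^2 \diff x.$$
Integrating the middle term by parts (no boundary terms appear, as $f$ is compactly supported in $\Omega$) turns it into $\tfrac12\IntN |f|^2\,\Delta(\log w_k)\diff x$. It then remains to evaluate the two potentials generated by $\log w_k = \sum_{\alpha\in R_+}2k_\alpha \log|\alx|$. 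Using $|\alpha|^2 = 2$ one gets $\Delta\log|\alx| = -2/\alx^2$, hence $\Delta\log w_k = -4\Suma k_\alpha/\alx^2$. For the gradient term, $\nabla\log w_k = 2\Suma k_\alpha\alpha/\alx$, so that
$$|\nabla\log w_k|^2 = 4\sum_{\alpha,\beta\in R_+} k_\alpha k_\beta \frac{\langle\alpha,\beta\rangle}{\alx\langle\beta,x\rangle} = 8\Suma \frac{k_\alpha^2}{\alx^2},$$
where the off-diagonal terms vanish precisely by the identity $\sum_{\alpha\neq\beta}k_\alpha k_\beta\langle\alpha,\beta\rangle/(\alx\langle\beta,x\rangle)=0$ quoted before the theorem, and the diagonal terms use $\langle\alpha,\alpha\rangle = 2$. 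Substituting both potentials yields
$$\IntN |\nabla h|^2 \diff\mu_k = \IntN |\nabla f|^2 \diff x - 2\Suma (k_\alpha - k_\alpha^2)\IntN \frac{|f|^2}{\alx^2}\diff x.$$

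Combining this identity with the weighted Hardy inequality and rearranging gives \eqref{manyparticleHardy} at once. The particular case \eqref{manyparticleHardyoptimal} then follows by setting $k_\alpha = \tfrac12$, since then $k_\alpha - k_\alpha^2 = \tfrac14$ and $2\gamma = \Suma 1 = |R_+|$.

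I expect the main obstacle to be the bookkeeping in the central computation, and in particular the cancellation of the cross terms in $|\nabla\log w_k|^2$: it is exactly the Dunkl identity $\sum_{\alpha\neq\beta}k_\alpha k_\beta\langle\alpha,\beta\rangle/(\alx\langle\beta,x\rangle)=0$ that makes the potential collapse to the clean coefficient $k_\alpha - k_\alpha^2$. This substitution is the ground-state representation underlying the relation \eqref{Dunkl-CMS} between $\mathcal{F}_k$ and $\Delta_k$; one must, however, use the \emph{Euclidean} gradient Dirichlet form $\IntN|\nabla h|^2\diff\mu_k$ rather than the Dunkl one $\IntN|\nabla_k h|^2\diff\mu_k$, since by Lemma \ref{dunkl-euclidean-dirichletform} the latter exceeds the former by $\Suma k_\alpha\IntN |f(x)-f(\sigma_\alpha x)|^2/\alx^2\,\diff x$, and carrying that extra nonnegative term through would yield only a strictly weaker inequality.
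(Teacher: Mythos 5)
Your proof is correct, but it takes a genuinely different route from the paper's. The paper conjugates: it sets $g=w_k^{-1/2}f$, applies the full Dunkl--Hardy inequality of Theorem \ref{hardy} to the Dunkl Dirichlet form $\IntN|\nabla_k g|^2\diff\mu_k$, and rewrites $-\IntN \overline{w_k^{1/2}g}\,\mathcal{F}_k(w_k^{1/2}g)\diff x$ via the explicit formula \eqref{Fkexpression}; this produces reflection cross-terms $\IntN \overline{f(x)}f(\sigma_\alpha x)\,\alx^{-2}\diff x$, which are then handled by assuming $f$ is supported in a single Weyl chamber, passing to its $G$-invariant extension $\tilde f$ (for which $\tilde f(\sigma_\alpha x)=\tilde f(x)$, so the cross-term collapses to $|f|^2$), and finally summing the chamber-wise inequalities. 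You bypass all of this by invoking only the intermediate Euclidean-gradient inequality $\IntN|\nabla h|^2\diff\mu_k\geq \tfrac{(N+2\gamma-2)^2}{4}\IntN |h|^2|x|^{-2}\diff\mu_k$ from inside the proof of Theorem \ref{hardy}, and performing the ground-state substitution with the ordinary gradient, so no reflection terms appear and no chamber decomposition is needed; the price is the explicit computation of $\Delta\log w_k$ and $|\nabla\log w_k|^2$, where the same Dunkl cross-term identity that the paper cites to justify \eqref{Dunkl-CMS} produces the clean coefficient $k_\alpha-k_\alpha^2$. The two arguments are essentially equivalent in strength (on $G$-invariant functions $\nabla_k$ and $\nabla$ coincide, which is exactly what the Weyl-chamber trick exploits), but yours is more streamlined and handles complex-valued $f$ without a separate reduction. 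One minor quibble: your closing remark that using $\nabla_k$ ``would yield only a strictly weaker inequality'' slightly mischaracterizes the alternative --- the paper does use $\nabla_k$, but only on $G$-invariant functions, for which the extra nonnegative difference term vanishes identically rather than being discarded.
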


\begin{proof}
Let $g=w_k^{-1/2} f \in C_c^\infty(\Omega; \CC)$. Using \eqref{Dunkl-CMS}, we deduce from the Hardy inequality of Theorem \ref{hardy} that
\begin{align} \label{multiparticleHardystep1}
- \IntN \overline{w_k^{1/2}g} \mathcal{F}_k(w_k^{1/2}g)  \diff x 
&= - \IntN \overline{g} \Delta_k g  \diff\mu_k  \nonumber
\\ 
&= \IntN |\nabla_k g|^2 \diff\mu_k 
\geq \frac{(N+2\gamma-2)^2}{4} \IntN \frac{|g|^2}{|x|^2} \diff\mu_k.
\end{align}
Using \eqref{Fkexpression}, we can compute the right hand side of this inequality as follows:
\begin{equation} \label{multiparticleHardystep2}
\begin{aligned} 
- \IntN \overline{w_k^{1/2}g} \mathcal{F}_k(w_k^{1/2}g)  \diff x 
&= \IntN |\nabla f|^2 \diff x + 2 \Suma k_\alpha^2 \IntN \frac{|f|^2}{\alx^2} \diff x 
\\
&\qquad - 2 \Suma k_\alpha \IntN \frac{\overline{f(x)} f(\sigma_\alpha x)}{\alx^2} \diff x.
\end{aligned}
\end{equation}

Assume first that the support of $f$ is contained in a Weyl chamber $H$ and consider the function $\tilde{f}$ defined by 
$$\tilde{f}(\sigma_\alpha x)= f(x) \qquad \forall x\in H,\; \forall\alpha\in R.$$
In other words, $\tilde{f}$ is a $G$-invariant function obtained by copying the function $f$ (defined on $H$) identically on each Weyl chamber. From \eqref{multiparticleHardystep1} and \eqref{multiparticleHardystep2} applied to the $G$-invariant function $\tilde{f}$, we have
\begin{equation*}
\IntN |\nabla \tilde{f}|^2 \diff x 
\geq 2 \Suma (k_\alpha - k_\alpha^2) \IntN \frac{|\tilde{f}|^2}{\alx^2} \diff x
+ \frac{(N+2\gamma-2)^2}{4} \IntN \frac{|\tilde{f}|^2}{|x|^2} \diff x.
\end{equation*}
By $G$-invariance and recalling that $\tilde{f}=f$ on $H$, this implies that
\begin{equation} \label{multiHardy_Weylchamber}
\int_H |\nabla f|^2 \diff x 
\geq 2 \Suma (k_\alpha - k_\alpha^2) \int_H \frac{|f|^2}{\alx^2} \diff x
+ \frac{(N+2\gamma-2)^2}{4} \int_H \frac{|f|^2}{|x|^2} \diff x,
\end{equation}
which is what we wanted to prove.

For general $f\in C^\infty_c (\Omega; \CC)$, let $f=\sum_H f_H$ where the sum goes over all Weyl chambers $H$ and $f_H=f$ on $H$ and vanishes elsewhere. Thus $f_H \in C_c^\infty(H; \CC)$ and so we obtain \eqref{multiHardy_Weylchamber} for each $H$. Adding up all the resulting inequalities, we obtain the desired inequality \eqref{manyparticleHardy}.

The constant in front of the first term on the right hand side of \eqref{manyparticleHardy} is maximised for $k_\alpha=\frac{1}{2}$, and in this case we have
$$\IntN |\nabla f|^2 \diff x 
\geq \frac{1}{2} \IntN |f|^2 \Suma \frac{1}{\alx^2} \diff x + \frac{(N+|R_+|-2)^2}{4} \IntN \frac{|f|^2}{|x|^2} \diff x,$$
as required.
\end{proof}

By taking the root system $A_{N-1}$  with $R_+=\{ e_i - e_j : 1 \leq i < j \leq N\}$, we obtain an improvement of the well-known Calogero-Moser-Sutherland many-particle Hardy inequality. Indeed, in this case all the roots belong to the same orbit of the reflection group $G=S_N$, so the multiplicity function is a constant $k_\alpha = k$ for all $\alpha\in R_+$, and $\gamma = k \frac{N(N-1)}{2}$.

\begin{cor} \label{multi-hardy-cor1}
Let $\Omega = \RR^N \setminus \bigcup_{1\leq i <j \leq N}\left\{ x : x_i = x_j\right\}$. For any $f\in C_c^\infty (\Omega; \CC)$, and for any $k \geq 0$, we have the inequality
\begin{align*}
\IntN |\nabla f|^2 \diff x 
&\geq 2(k-k^2) \IntN |f|^2 \sum_{1 \leq i < j \leq N} \frac{1}{(x_i-x_j)^2} \diff x 
\\
&\qquad\qquad
+ \frac{(N+kN(N-1)-2)^2}{4} \IntN \frac{|f|^2}{|x|^2} \diff x.
\end{align*}
In particular, by taking $k=\frac{1}{2}$, we have
$$ \IntN |\nabla f|^2 \diff x 
\geq \frac{1}{2} \IntN |f|^2 \sum_{1 \leq i < j \leq N} \frac{1}{(x_i-x_j)^2} \diff x + \frac{(N^2+N-4)^2}{16} \IntN \frac{|f|^2}{|x|^2} \diff x.$$
\end{cor}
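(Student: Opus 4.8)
The plan is to obtain the Corollary as a direct specialization of Theorem \ref{generalisedmultiHardy} to the root system $A_{N-1}$, so that the proof reduces to recording the relevant root-system data and substituting into \eqref{manyparticleHardy}. First I would note the structure of $A_{N-1}$ with positive subsystem $R_+ = \{e_i - e_j : 1 \le i < j \le N\}$: the cardinality is $|R_+| = \binom{N}{2} = \tfrac{N(N-1)}{2}$, and the normalization $|\alpha|^2 = 2$ assumed throughout the paper holds automatically, since $|e_i - e_j|^2 = 2$, so no rescaling is needed. Because $G = S_N$ acts transitively on $R_+$ and the multiplicity is required to be $G$-invariant, we have $k_\alpha = k$ for a single constant $k \ge 0$, and therefore $\gamma = \sum_{\alpha \in R_+} k_\alpha = k\,\tfrac{N(N-1)}{2}$. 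Finally, for $\alpha = e_i - e_j$ one computes $\langle \alpha, x\rangle = x_i - x_j$, so the excluded set $\bigcup_{\alpha \in R_+}\{x : \langle\alpha,x\rangle = 0\}$ coincides with $\bigcup_{1 \le i < j \le N}\{x : x_i = x_j\}$, which matches the domain $\Omega$ in the statement.

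With this dictionary in hand, I would simply insert the data into inequality \eqref{manyparticleHardy}. The coupling term $2\sum_{\alpha\in R_+}\tfrac{k_\alpha - k_\alpha^2}{\langle\alpha,x\rangle^2}$ becomes $2(k - k^2)\sum_{1 \le i < j \le N}\tfrac{1}{(x_i - x_j)^2}$, and the Hardy constant becomes $\tfrac{(N + 2\gamma - 2)^2}{4} = \tfrac{(N + kN(N-1) - 2)^2}{4}$, which is exactly the first displayed inequality. For the special case I would set $k = \tfrac12$, the value maximizing $k - k^2$ (as already noted in Theorem \ref{generalisedmultiHardy}); then $2(k - k^2) = \tfrac12$, while $N + kN(N-1) - 2 = \tfrac{N^2 + N - 4}{2}$, so the constant collapses to $\tfrac{(N^2 + N - 4)^2}{16}$, giving the second inequality.

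I do not expect any genuine obstacle, since the result is a clean substitution into an already-established general theorem; the only point requiring care is the bookkeeping that translates the abstract quantities $\gamma$ and $|R_+|$ into explicit functions of $N$ and $k$, and the verification that the arithmetic in the $k = \tfrac12$ case really produces the stated constants $\tfrac12$ and $\tfrac{(N^2+N-4)^2}{16}$.
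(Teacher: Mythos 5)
Your proposal is correct and matches the paper's (implicit) argument exactly: the corollary is obtained by specializing Theorem \ref{generalisedmultiHardy} to the root system $A_{N-1}$, noting that $G=S_N$ acts transitively on $R_+$ so $k_\alpha\equiv k$ and $\gamma = k\,\tfrac{N(N-1)}{2}$, and substituting. The bookkeeping and the arithmetic for $k=\tfrac12$ are all verified correctly.
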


We also record here the Hardy inequality corresponding to a root system of type $B_N$. In this case $R_+ = \{ \sqrt{2} e_i : 1 \leq i \leq N \} \bigcup \{ e_i \pm e_j : 1 \leq i <j \leq N\}$ and the multiplicity function reduces to two constants $k_1, k_2 \geq 0$. 

\begin{cor} \label{multi-hardy-cor2}
Let $\Omega = \RR^N \setminus \bigcup_{1\leq i \leq N} \left\{x : x_i=0 \right\} \setminus \bigcup_{1\leq i < j \leq N} \left\{x: x_i=\pm x_j\right\}$. For any $f \in C_c^\infty (\Omega; \CC)$ and for any $k_1, k_2 \geq 0$ we have the inequality
\begin{align*}
\IntN |\nabla f|^2 \diff x 
&\geq (k_1-k_1^2) \IntN |f|^2 \sum_{i=1}^N \frac{1}{x_i^2} \diff x 
\\
&\qquad 
+ 2(k_2-k_2^2) \IntN |f|^2 \sum_{1 \leq i <j \leq N} \left[ \frac{1}{(x_i-x_j)^2} +\frac{1}{(x_i+x_j)^2} \right] \diff x
\\
&\qquad
+ \frac{(N+2k_1N+k_2N(N-1) -2)^2}{4} \IntN \frac{|f|^2}{|x|^2} \diff x.
\end{align*}
\end{cor}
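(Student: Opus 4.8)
The plan is to obtain the inequality as a direct specialization of Theorem \ref{generalisedmultiHardy} to the root system $R = B_N$, so that all the analytic content is already in place and what remains is to read off the two orbit data that feed into that theorem. First I would record the structure of $B_N$: the reflection group $G = W(B_N)$ acts on $R_+ = \{\sqrt 2\, e_i : 1\le i\le N\} \cup \{e_i \pm e_j : 1\le i<j\le N\}$ with exactly two orbits, the short roots $\sqrt 2\, e_i$ and the long roots $e_i \pm e_j$. Hence any $G$-invariant multiplicity function takes at most two values, $k_1$ on the short roots and $k_2$ on the long roots, and these are the $k_1, k_2 \ge 0$ of the statement. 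I would also check that the excluded set in Theorem \ref{generalisedmultiHardy}, namely $\bigcup_{\alpha\in R_+}\{x : \langle\alpha,x\rangle = 0\}$, coincides with the set removed in the corollary: $\langle \sqrt 2\, e_i, x\rangle = 0$ is $x_i = 0$, while $\langle e_i \pm e_j, x\rangle = 0$ is $x_i = \mp x_j$, so as the sign ranges we recover $x_i = \pm x_j$ and the two domains $\Omega$ agree.

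Next I would evaluate the two quantities appearing on the right-hand side of \eqref{manyparticleHardy}. For the potential term $2\sum_{\alpha\in R_+} \frac{k_\alpha - k_\alpha^2}{\langle\alpha,x\rangle^2}$, I split the sum over the two orbits. The short roots are taken as $\sqrt 2\, e_i$ precisely so that $|\alpha|^2 = 2$, whence $\langle\alpha,x\rangle^2 = 2x_i^2$; the prefactor $2$ then cancels this denominator and produces the clean coefficient $(k_1 - k_1^2)\sum_i x_i^{-2}$. For the long roots $\langle e_i \pm e_j, x\rangle^2 = (x_i \pm x_j)^2$ carries no extra factor, so the prefactor $2$ survives and yields $2(k_2 - k_2^2)\sum_{i<j}\big[(x_i-x_j)^{-2} + (x_i+x_j)^{-2}\big]$. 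These are exactly the first two terms in the statement.

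For the Hardy constant I would compute $\gamma = \sum_{\alpha\in R_+} k_\alpha$ by counting the two orbit sizes, namely $N$ short roots and $N(N-1)$ long roots in $R_+$, and substitute the resulting $\gamma$ into $\frac{(N+2\gamma-2)^2}{4}$ to obtain the third term. Assembling these three pieces into \eqref{manyparticleHardy} then yields the corollary.

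I do not expect any genuine obstacle, since the analytic heavy lifting — the passage from the Hardy inequality of Theorem \ref{hardy} through the CMS conjugation \eqref{Dunkl-CMS} and the Weyl-chamber gluing argument — is already carried out in the proof of Theorem \ref{generalisedmultiHardy}. The only step demanding care is bookkeeping the normalization $|\alpha|^2 = 2$ consistently across the two orbits: because it rescales the short roots to $\sqrt 2\, e_i$ while leaving the long roots $e_i \pm e_j$ unchanged, the factor $2$ in the theorem is absorbed into the short-root potential but not into the long-root potential, and analogously the two orbit sizes enter $\gamma$ with different weights. Getting these factors of $2$ right in both the potential and in $\gamma$ is the one place where an arithmetic slip could occur.
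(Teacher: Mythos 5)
Your approach is exactly the paper's: Corollary \ref{multi-hardy-cor2} is stated without a separate proof, as a direct specialization of Theorem \ref{generalisedmultiHardy} to the root system $B_N$. Your identification of the two orbits, of the domain $\Omega$ (via $\langle \sqrt2\,e_i,x\rangle=0 \Leftrightarrow x_i=0$ and $\langle e_i\pm e_j,x\rangle=0 \Leftrightarrow x_i=\mp x_j$), and of the two potential terms --- including the absorption of the prefactor $2$ into the short-root denominators $\langle\sqrt2\,e_i,x\rangle^2=2x_i^2$ while it survives for the long roots --- is correct and reproduces the first two terms of the statement exactly.

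The one step you yourself flagged as delicate is, however, where your write-up does not close. With $N$ short and $N(N-1)$ long roots in $R_+$ (as you correctly count), $\gamma=Nk_1+N(N-1)k_2$, hence $N+2\gamma-2=N+2Nk_1+2N(N-1)k_2-2$, whereas the printed third term reads $N+2k_1N+k_2N(N-1)-2$: the $k_2$ contribution differs by a factor of $2$. So the substitution you describe does not ``obtain the third term'' as printed; it yields the larger constant $\frac{\left(N+2k_1N+2k_2N(N-1)-2\right)^2}{4}$, from which the printed (weaker) inequality follows a fortiori since the base is nonnegative for $N\geq 2$. The larger constant is also the one consistent with \eqref{manyparticleHardyoptimal}: at $k_1=k_2=\tfrac12$ that display gives $\frac{(N+|R_+|-2)^2}{4}=\frac{(N+N^2-2)^2}{4}$, not the value the printed formula of the corollary would produce. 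You should either state the stronger constant your computation actually gives, or note explicitly that the printed constant is implied by it; asserting exact agreement glosses over exactly the factor-of-$2$ bookkeeping you identified as the risk.
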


\begin{remark}
By taking $k_1=\frac{1}{2}$ and $k_2=0$ in Corollary \ref{multi-hardy-cor2}, we recover the inequality
$$ \IntN |\nabla f|^2 \diff x \geq \frac{1}{4} \IntN |f|^2 \sum_{i=1}^N \frac{1}{x_i^2} \diff x + (N-1)^2 \IntN \frac{|f|^2}{|x|^2} \diff x,$$
which was proved in \cite{Tidblom}, where it was also shown that the constants $\frac{1}{4}$ and $(N-1)^2$ are optimal. Similarly, Corollary \ref{multi-hardy-cor1} extends to general $k\geq 0$ the results of \cite{Lundholm} where the following inequality was proved
$$ \IntN |\nabla f|^2 \diff x \geq \frac{1}{2} \IntN |f|^2 \sum_{1 \leq i < j \leq N} \frac{1}{(x_i-x_j)^2} \diff x.$$
\end{remark}

\noindent \textbf{Acknowledgements.} The author wishes to thank Sergey Tikhonov and Hatem Mejjaoli for pointing out missing references. Financial support from EPSRC is also gratefully acknowledged.

\bibliographystyle{plain}
\bibliography{ref}

\end{document}